\newtheorem{theorem}{Theorem}
\newtheorem{lemma}[theorem]{Lemma}
\newtheorem{prop}[theorem]{Proposition}
\newtheorem{defn}[theorem]{Definition}
\theoremstyle{remark}
\newtheorem*{rmk}{Remark}
\numberwithin{theorem}{section}
\newcommand{\bydef}{:=}
\newcommand{\charf}{\mbox{{\text 1}\kern-.24em {\text l}}}
\newcommand{\del}{\partial}
\newcommand{\dist}{{\mbox{dist}}}
\renewcommand{\div}{\operatorname{div}}
\newcommand{\eps}{\varepsilon}
\renewcommand{\geq}{\geqslant}
\newcommand{\lapi}{\operatorname{{\Delta_\infty}}}
\newcommand{\lapih}{{\Delta_{\infty}^h}}
\renewcommand{\leq}{\leqslant}
\newcommand{\oo}[1]{\frac{1}{#1}}
\newcommand{\openR}{{{\text I}\kern-.22em {\text R}}}
\newcommand{\R}{{\mathbb R}}
\newcommand{\Rd}{{\R^d}}
\numberwithin{equation}{section}
\begin{document}

\title
{\bf Degenerate homogeneous parabolic equations \\ associated with the
infinity-Laplacian
}

\author{Manuel Portilheiro\footnote{\texttt{manuel.portilheiro@uam.es}}\\[4pt]
Juan Luis V\'azquez\footnote{\texttt{juanluis.vazquez@uam.es}}}

\date{}
\maketitle

\begin{abstract}
We prove existence and uniqueness of viscosity solutions to the degenerate parabolic problem $u_t = \lapih u$ where $\lapih$ is the $h$-homogeneous operator associated with the infinity-Laplacian, $\lapih u = |Du|^{h-3} \langle D^2uDu,Du\rangle$. We also derive the asymptotic behaviour of $u$ for the problem posed in the whole space and for the Dirichlet problem with zero boundary conditions.
\end{abstract}

\allowdisplaybreaks

\section{Introduction}\label{S:intro}

In this paper we study the following class of
degenerate parabolic equations for $u:Q\to\R$, $u=u(x,t)$ with a
parameter $h > 1$:
\begin{align}
     u_t - \lapih u &= 0, \quad\text{in $Q$}, \label{E:main} \\
     u &= g, \quad\text{on $\Gamma$}, \label{E:main-bc}
\end{align}
 where $\lapih$ denotes the
$h$-homogeneous degenerate operator,
\[
    \lapih u \bydef |Du|^{h-3} \sum_{i,j=1}^d u_{x_ix_j} u_{x_i} u_{x_j}.
\]
Note the $h$ stands for the homogeneity degree of the operator.
The equation is posed for $(x,t)\in Q \bydef \Omega\times(0,T)$,
where  $\Omega \subset \R^d$ is open (not necessarily bounded), $0<T\leq \infty$, \normalcolor $\Gamma = \del_p Q$ the parabolic boundary of $Q$ and
$g:\Gamma\to\R$ is a continuous function.

Equation \eqref{E:main} has been studied in two particular cases,
$h=1$ and $h=3$, which correspond to the two common definitions of
the infinity-Laplacian. For $h=1$ we find the usual 1-homogeneous
infinity-Laplacian operator, $\lapi u = |Du|^{-2}\sum_{i,j}
u_{x_ix_j} u_{x_i} u_{x_j}$, so that for other $h$ we get the formula
$\lapih u = |Du|^{h-1} (\lapi u)$.
The infinity-Laplacian operator has received a lot of attention in
the last decade, notably due to its application to image processing.
Most of the interesting properties of the
$\infty$-Laplace equation can be found in Crandall's enjoyable
paper \cite{C} (the long list of references therein covers most of
what we avoid listing here). Regarding the evolution of the
infinity-Laplacian, the above two cases ($h=1,3$) have been studied
in a number of works. The 1-homogeneous version has been considered
by Wu, Juutinen-Kawhol and Barron-Evans-Jensen (\cite{W,JK,BEJ}; see
\cite{JK} for additional references in applications), whereas the
3-homogeneous equation has been treated by Crandall-Wang,
Akagi-Suzuki, Akagi-Juutinen-Kajikiya and Laren\c{c}ot-Stinner
(\cite{CW,AS,AJK,LS}).

We think that equation \eqref{E:main}, and its generalization
\eqref{E:gen-main}, apart from their applications, have intrinsic
interest, as they are degenerate parabolic, singular when $h<3$, do
not correspond to a geometric equation, are not variational and are
not in divergence form (with the exception of the 1-homogeneous
infinity-heat equation in two space dimensions, \cite{DGIMR}). They
constitute a class of equations with particular difficulties and
properties.

It is also worthwhile pointing out that radial solutions of
\eqref{E:main} (or if we consider the equation in one space
dimension) correspond to solutions of the $p$-Laplace evolution
equation
\[
	u_t = \div \left( |Du|^{p-2}Du \right)
\]
in one space dimension with $h = p -1$. In fact, a comparison between
the radial solutions in Section~\ref{S:special-sol} and solutions of
the $p$-Laplace evolution make this evident.

Here we consider all cases $h>1$. Since the results and techniques
for $h=1$ are slightly different, we will not consider this case
below and will treat it in a separate paper.  For the case $h=3$
we extend slightly both the existence and the asymptotic results.
Our existence result allows for an extra source term $H(u)$ (see
\eqref{E:gen-main} bellow) and we do not impose the exterior
sphere condition.

We are specially interested in the asymptotic behaviour of solutions. We make the assumption of
nonnegativity on data and solutions. For bounded domains we do not impose any extra conditions on $\Omega$.
We also study the Cauchy problem posed in the whole $\Rd$, where the decay rate is different, as expected, and we get a precise convergence result to a self-similar solution of Barenblatt type.

\subsection{Main results}
To obtain the asymptotic behaviour on bounded domains it will be
useful to prove existence for a slightly more general equation,
allowing for zero-order terms. Therefore, we consider
\begin{equation}\label{E:gen-main}
    u_t = \lapih u + H(u),
\end{equation}
where $H:\R\to\R$ is a continuous function satisfying $H(0)=0$ and growing at most linearly at infinity, that is, there
exists $M>0$ such that $|H(y)|\leq M|y|$.
\begin{theorem}[Existence]\label{T:exist}
Let $Q\subset\R^{d+1}$, $g:\Gamma\to \R$ be continuous and $H:\R\to\R$ be as above. If either $\Omega$ is bounded or $g$ is bounded,
uniformly continuous in $\Gamma$ and
\[
    \lim_{R\to\infty} \sup_{|x|\geq R, t\in[0,T)} |g(x,t)| = 0,
\]
then there exists a unique continuous viscosity solution $u$ of
\eqref{E:gen-main} with $u=g$ on $\Gamma$ whose modulus of
continuity depends on the modulus of continuity of $g$. If $g$ is
Lipschitz continuous, then so is $u$ and its Lipschitz constant
depends on the Lipschitz constant of $g$.
\end{theorem}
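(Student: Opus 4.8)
The plan is to obtain the solution by Perron's method built on a comparison principle for \eqref{E:gen-main}, and then to read off the regularity assertions by comparing $u$ with its own translates. The heart of the matter is the comparison principle: if $u$ is a bounded upper semicontinuous subsolution and $v$ a bounded lower semicontinuous supersolution of \eqref{E:gen-main} with $u\le v$ on $\Gamma$, then $u\le v$ on $Q$. I would prove this by the standard doubling-of-variables device, maximizing $u(x,t)-v(y,s)-|x-y|^2/(2\eps)-(t-s)^2/(2\delta)-\gamma/(T-t)-\mu(|x|^2+|y|^2)$, applying the parabolic theorem on sums, and letting $\eps,\delta,\mu,\gamma\to 0$. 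Two points need care. First, $\lapih$ is singular at points where the gradient vanishes when $1<h<3$; this is handled, as is customary for singular geometric operators, through the definition of viscosity solution (test functions with vanishing gradient and definite Hessian being excluded, or the operator replaced by its semicontinuous envelope), and the doubling estimate only ever involves the penalization matrices, whose gradient variable $p=(x-y)/\eps$ stays nonzero along the relevant subsequence. Second, the zero-order term produces $H(u(\hat x,\hat t))-H(v(\hat y,\hat s))$; since the solutions are bounded, $H$ is uniformly continuous on the relevant interval, and one closes the estimate by a Gronwall-type bound for $t\mapsto\sup_x(u-v)^+$, carried out first for Lipschitz $H$ (using $|H(a)-H(b)|\le M|a-b|$) and then for general $H$ by uniform approximation. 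In the unbounded-domain case one additionally uses the decay hypothesis $\lim_{R\to\infty}\sup_{|x|\ge R}|g|=0$ together with a slowly growing spatial barrier, so that the suprema appearing in the argument are attained.

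Next I would construct barriers at the parabolic boundary. Using the explicit radial solutions of \eqref{E:main} from Section~\ref{S:special-sol} (which, by the reduction noted in the introduction, are essentially one-dimensional $p$-Laplace profiles with $h=p-1$), one builds at every point of $\Gamma$ local super- and subsolutions of \eqref{E:gen-main} that coincide with $g$ at that point and trap $g$ in a neighbourhood, with size controlled by the modulus of continuity $\omega_g$; in the unbounded case the decay of $g$ also furnishes a global barrier governing the behaviour as $|x|\to\infty$.

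With comparison and barriers in hand, Perron's method is routine. Set $u(x,t)=\sup\{w(x,t)\}$ over all bounded subsolutions $w$ of \eqref{E:gen-main} lying below $g$ on $\Gamma$ (and below the global barrier in the unbounded case); this family is nonempty and bounded above by the previous two steps. The usual bump-function arguments show that $u^*$ is a subsolution and $u_*$ a supersolution; the barriers force $u^*=u_*=g$ on $\Gamma$; and the comparison principle gives $u^*\le u_*$. Hence $u:=u^*=u_*$ is a continuous viscosity solution of \eqref{E:gen-main} with $u=g$ on $\Gamma$, and uniqueness follows once more from comparison.

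Finally, since \eqref{E:gen-main} is autonomous and translation invariant in $x$, for any shift $(z,\tau)$ with $\tau\ge 0$ the function $(x,t)\mapsto u(x+z,t+\tau)$ solves the same equation on the shifted cylinder. Comparing it with $u$ on the relevant parabolic boundary — where the difference is controlled by $\omega_g(|z|+\tau)$ plus a boundary-layer estimate coming from the barriers — and invoking the comparison principle (which contributes a factor $e^{M\tau}$ from the linear growth of $H$), one gets $|u(x+z,t+\tau)-u(x,t)|\le C\,\omega_g(|z|+\tau)$, so $u$ inherits a modulus of continuity from $g$, and $\omega_g(r)=Lr$ yields $u$ Lipschitz with constant depending on $L$, $M$ and $T$. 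The main obstacle is the comparison principle itself: reconciling the singularity of $\lapih$ at zero gradient when $h<3$ with the doubling argument, and simultaneously coping with the merely continuous, linearly growing source $H$ and with the unbounded domain; once that is secured, the remaining steps are standard adaptations of the known theory for the evolution of the infinity-Laplacian.
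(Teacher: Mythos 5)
Your route (Perron's method resting on a comparison principle for \eqref{E:gen-main}, proved by doubling of variables) is genuinely different from the paper's, which never proves comparison for the limit equation with the zero-order term: the paper regularizes (the $\eps,\delta$-approximation \eqref{E:app-e-d}), derives uniform Lipschitz-in-time and H\"older/Lipschitz-in-space estimates by explicit barriers, and passes to the limit by compactness and stability, treating unbounded $\Omega$ by exhaustion. The trouble with your plan is that its linchpin — comparison for \eqref{E:gen-main} — is exactly where the hypotheses are too weak for the argument you sketch. $H$ is only assumed continuous with $H(0)=0$ and $|H(y)|\leq M|y|$; it need be neither Lipschitz nor one-sided Lipschitz nor nonincreasing, so the equation is not proper and the CIL machinery does not apply as is. Your fix, ``Gronwall for Lipschitz $H$, then general $H$ by uniform approximation,'' does not close: if $H_k\to H$ uniformly with error $\eta_k$ and Lipschitz constant $M_k$, a subsolution of the original equation is only a subsolution of the $H_k$-equation up to the error $\eta_k$, and the Gronwall step multiplies this error by $e^{M_kT}$; since $M_k\to\infty$ as $\eta_k\to 0$ (otherwise $H$ was Lipschitz to begin with), the product need not vanish. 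Since comparison is what you use for Perron, for uniqueness, for attaining the boundary data, and for the translation argument giving the modulus of continuity (where you again invoke a factor $e^{M\tau}$), this is a foundational gap, not a technicality. The paper avoids it because the only comparisons it needs involving $H$ are (i) for the smooth, uniformly parabolic approximations, and (ii) against explicit \emph{strict} super/subsolutions, where at a first touching point the values of the two functions coincide and the $H$-terms cancel exactly, so no regularity of $H$ beyond continuity is used. (Also, the singularity for $1<h<3$ is handled in the paper simply by the continuous extension $\overline{F}_h$, with $\overline{F}_h(M,0)=0$; you do not need, and cannot in general arrange, that the doubling gradient $p=(\hat x-\hat y)/\eps$ stays away from zero.)

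A second, smaller gap is the boundary-barrier step. The theorem imposes no regularity on $\del\Omega$ (this is precisely how the paper improves on \cite{AS}, which assumes an exterior sphere condition), so barriers must work at completely irregular lateral boundary points. The explicit similarity, separated-variables and traveling-wave solutions of Section~\ref{S:special-sol} do not obviously provide such barriers; what does work — and is what the paper uses in Theorems~\ref{T:holder-bdry} and~\ref{T:lip-bdry} — are cusp-type comparison functions of the form $g(P_0)+K_*|x-x_0|^{\alpha}+\lambda(t_0-t)$ centered at the boundary point itself, which are supersolutions because $\lapih$ acts only along the gradient direction (no ambient $(d-1)/r$ term appears, unlike for the Laplacian), with the term $H$ absorbed via $|H|\leq M|\cdot|$. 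If you make the barrier step explicit in this way, and either assume $H$ Lipschitz (or one-sided Lipschitz) or restructure the argument so that comparison is only invoked in the two harmless situations above, your Perron scheme becomes a viable alternative to the paper's approximation-compactness proof; as written, the comparison principle you rely on is unproved and your proposed proof of it fails.
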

\begin{rmk}
In the case $h=3$ this slightly extends \cite[Theorem~2.5]{AS}, not only regarding the extra source term $H(u)$, but as
we do not impose the exterior sphere condition on the boundary of $\Omega$ and we consider unbounded sets.
\end{rmk}

We want to analyse the large time behaviour of the solutions of \eqref{E:main}--\eqref{E:main-bc} in two cases. The first
is the Cauchy problem, $\Omega = \Rd$, with compactly supported initial condition $g = u_0 \in C_{\mathrm{c}}(\Rd)$.
\begin{equation}\label{E:main-cauchy}\begin{cases}
    u_t = \lapih u, &\text{in $\Rd\times (0,\infty)$}, \\
    u(x,0) = u_0(x).
\end{cases}
\end{equation}
In this case $u$ converges to a radial decaying solution.
\begin{theorem}[Asymptotic behaviour for the Cauchy problem]\label{T:Cauchy}
Let $u$ be the unique viscosity solution of \eqref{E:main-cauchy} with $u_0\in C_{\mathrm{c}}(\Rd)$ not identically
zero, $u_0 \geq 0$. Then, for some $R_* >0$,
\[
    \lim_{t\to\infty} t^{-\oo{2h}} \sup_{x\in\Rd} |u(x,t) - B_{R_*,h}(x,t)| = 0.
\]
where $B_{R,h}$ is the Barenblatt solution, which takes the form
\[
	B_{R,h}(x,t) \bydef c_h\, t^{-\oo{2h}} \left[ R^{\frac{h+1}{h}} - t^{-\frac{h+1}{2h^2}}
        |x|^{\frac{h+1}{h}} \right]^{\frac{h}{h-1}}_+,
\]
as given in \eqref{E:barenblatt}.
\end{theorem}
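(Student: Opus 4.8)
The plan is to run the classical four-step rescaling scheme for the large-time asymptotics of nonlinear diffusions, adapted to the viscosity--comparison framework of Theorem~\ref{T:exist}. Since $\lapih$ is not in divergence form there is no conserved mass, so $R_*$ cannot be read off the data and must be extracted from the dynamics; a monotonicity argument on ``enveloping Barenblatt radii'' will play the role usually played by mass conservation. \emph{Step 1 (bounds, finite propagation, sandwich).} Since constants solve \eqref{E:main-cauchy}, comparison gives $0\le u\le\|u_0\|_\infty$. For $h>1$ the equation has finite speed of propagation: truncated Barenblatt-type functions are supersolutions---this requires first checking that $B_{R,h}$ (see \eqref{E:barenblatt} and Section~\ref{S:special-sol}) is a genuine viscosity solution, it being $C^1$ but failing to be $C^2$ only on its lateral free boundary and at its vertex, where the equation is to be verified in the viscosity sense---whence $\supp u(\cdot,t)\subset\overline{B_{\rho(t)}}$ with $\rho(t)\le C(1+t)^{\oo{2h}}$. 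Comparing $u_0$ with a sufficiently tall and wide Barenblatt, $u(x,t)\le B_{R_+,h}(x,t+\tau_+)$ for all $t\ge0$; since $u_0\ge\delta>0$ on some ball, a small spreading subsolution together with non-extinction (a feature of the $h>1$, slowly-diffusing regime) gives $u(x,t)\ge B_{R_-,h}(x,t+\tau_-)$ for $t\ge t_0$.

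\emph{Step 2 (rescaling, compactness).} The equation is invariant under $u\mapsto u_\lambda(x,t):=\lambda^{\oo{2h}}u(\lambda^{\oo{2h}}x,\lambda t)$, and each $B_{R,h}$ is a fixed point of this rescaling. By Step~1, $\{u_\lambda\}_{\lambda\ge1}$ is uniformly bounded and uniformly compactly supported on every slab $\{t\ge\eps\}$, while the regularity estimates of Theorem~\ref{T:exist}, applied on advancing time intervals, give equicontinuity; hence along any $\lambda_n\to\infty$ a subsequence converges locally uniformly on $\Rd\times(0,\infty)$ to a viscosity solution $U$. The relation $u_{\mu\lambda}=(u_\lambda)_\mu$ forces $U$ to be self-similar, $U(x,t)=t^{-\oo{2h}}\Phi(xt^{-\oo{2h}})$, with $\Phi\ge0$ bounded and compactly supported and $B_{R_-,h}\le U\le B_{R_+,h}$.

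\emph{Step 3 (identification and uniqueness of $R_*$).} One shows $U$ is radially symmetric; the profile then solves, in the viscosity sense, the ODE obtained by substitution, which integrates once to $|\phi'|^{h-1}\phi'+\tfrac12 r\phi=0$, whose only nonnegative, bounded, compactly supported solution is the Barenblatt profile $c_h[R^{(h+1)/h}-r^{(h+1)/h}]_+^{h/(h-1)}$; thus $U=B_{R_*,h}$. To see that $R_*$ is the same for every subsequence, set, for $t$ past the regularization time, $R_+(t):=\inf\{R:u(\cdot,t)\le B_{R,h}(\cdot,t)\}$ and $R_-(t):=\sup\{R:u(\cdot,t)\ge B_{R,h}(\cdot,t)\}$; these are finite and positive by Step~1, and since each $B_{R,h}$ is a solution, comparison makes $R_+$ nonincreasing and $R_-$ nondecreasing with $R_-(t)\le R_+(t)$, so $R_\pm(t)\to R_\pm^\infty$. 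Using the scaling identity $R_\pm^{u_\lambda}(t)=R_\pm(\lambda t)$ together with the non-degeneracy of $u$ at its free boundary (so that supports converge in the limit), one obtains $R_\pm^U=R_*$, hence $R_-^\infty=R_+^\infty=:R_*$. Finally $B_{R_-(t),h}(x,t)\le u(x,t)\le B_{R_+(t),h}(x,t)$ and $\sup_x|B_{R_+(t),h}(x,t)-B_{R_-(t),h}(x,t)|=o(t^{-\oo{2h}})$ by continuity of the profile in $R$, which yields $t^{\oo{2h}}\sup_x|u(x,t)-B_{R_*,h}(x,t)|\to0$ and hence the stated convergence.

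The main obstacle is the identification in Step~3: proving the radial symmetry of the self-similar limit $U$ (with no variational structure and no moving-plane method directly available for $\lapih$), and establishing the non-degeneracy of $u$ near its free boundary so that subsequential limits keep the correct support and the envelopes $R_\pm$ are stable under the limit. The absence of divergence form is exactly what blocks the usual shortcut through mass conservation and forces these more delicate ingredients.
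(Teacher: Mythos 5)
Your Steps 1--2 are sound and essentially match the paper's setup (sandwiching $u$ between Barenblatt solutions, the rescaling $u^\lambda(x,t)=\lambda^{\oo{2h}}u(\lambda^{\oo{2h}}x,\lambda t)$, compactness from the regularity estimates of Theorem~\ref{T:exist}). The gap is in Step 3, which is where the entire difficulty sits, and your substitutes for the missing mass conservation do not close it. First, radial symmetry of the limit is asserted rather than proven; the paper's device here is Aleksandrov's reflection principle (as in \cite{CVW}): since the equation is invariant under reflections and enjoys comparison, reflecting across hyperplanes that do not meet $\supp u_0\subset B_R(0)$ gives the quantitative near-radiality $\inf_{|x|=r}u(x,t)\geq\max_{|x|=r+2R}u(x,t)$ for $r>R$, and after rescaling $R_\lambda=R\lambda^{-\oo{2h}}\to 0$, so the limit is radial --- nothing of this kind appears in your argument, and you yourself flag it as open. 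Second, deducing self-similarity of a subsequential limit from $u_{\mu\lambda}=(u_\lambda)_\mu$ is circular: it needs the limit to be independent of the subsequence, which is what is to be proven. Third, and most seriously, the envelope argument does not determine $R_*$: monotonicity gives $R_-(t)\uparrow R_-^\infty\leq R_+^\infty\downarrow$, but nothing forces the two limits to coincide, and your final estimate $\sup_x|B_{R_+(t),h}-B_{R_-(t),h}|=o(t^{-\oo{2h}})$ presupposes $R_+(t)-R_-(t)\to 0$, i.e.\ the conclusion. The appeal to ``non-degeneracy of $u$ at its free boundary'' is itself an unproven and delicate property; a solution uniformly close to $B_{R_*,h}$ can keep $R_+(t)$ bounded away from $R_*$ because of arbitrarily small excess near the interface.

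The paper closes this step by a different mechanism: radial solutions of \eqref{E:main} solve the one-dimensional $p$-Laplacian equation $v_t=\oo{h}\left((v_r)^h\right)_r$ with $p=h+1$, which \emph{is} in divergence form, so a one-dimensional mass is available even though the full equation conserves nothing. After the reflection step one sets $\tilde u_1(r)=\inf_{|x|=r}u^\lambda(x,1)$ and $\tilde u_2(r)=\max_{|x|=r}u^\lambda(x,1)$; these radial functions sandwich $u^\lambda$, are monotone for $r\geq\eps$, and the $1$-$d$ mass of $\tilde u_2-\tilde u_1$ is at most $C\eps$. The radial solutions $u_1,u_2$ emanating from them then converge, by the known one-dimensional theory (cf.\ \cite[Theorem 18.1]{V}), to Barenblatt profiles whose radii are fixed by these masses, hence differ by $o(1)$ as $\eps\to 0$; that is what identifies a unique $R_*$ and yields the uniform $o(t^{-\oo{2h}})$ estimate. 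To salvage your envelope scheme you would need a quantitative ingredient of exactly this type (or a genuine free-boundary non-degeneracy estimate), neither of which is supplied in your proposal.
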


Next we analyse the Dirichlet problem with homogeneous boundary conditions,
\begin{equation}\label{E:homo-Dirichlet}
\begin{cases}
    u_t - \lapih u =0, &\text{in $Q$}, \\
    u(x,0) = u_0(x), &\text{on $\Omega$}, \\
    u(x,t) = 0, &\text{for $x\in\del\Omega$, $t>0$},
\end{cases}
\end{equation}
where $u_0\in C(\overline{\Omega})$ satisfies $u_0(x) = 0$ for $x\in\del\Omega$, $\Omega$ bounded and
$Q = \Omega\times(0,\infty)$. For nonnegative initial data, we obtain the following.
\begin{theorem}[Asymptotic behaviour for the homogeneous Dirichlet problem]\label{T:asymptotic-Dirichlet}
Suppose $u_0\in C(\overline{\Omega})$, $u \geq 0$, $u_0 \not\equiv 0$ and $u_0(x) = 0$ for $x\in\del\Omega$.
There exists a continuous function $F_\Omega:\Omega\to\R$ such that the solution of \eqref{E:homo-Dirichlet}, $u$ satisfies
\[
    \lim_{t\to\infty} t^{\oo{h-1}} u(x,t) = F_\Omega(x).
\]
\end{theorem}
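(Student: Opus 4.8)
Proof proposal for Theorem~\ref{T:asymptotic-Dirichlet}.

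The plan is to remove the algebraic decay using the exact scaling symmetry of \eqref{E:main}: if $u$ solves \eqref{E:main}, so does $\mu^{\oo{h-1}}u(x,\mu t)$ for every $\mu>0$. Setting
\[
    v(x,s)\bydef t^{\oo{h-1}}\,u(x,t),\qquad t=e^{s},
\]
a direct computation, using that $\lapih$ is $h$-homogeneous in its argument, shows that $v$ solves the \emph{autonomous} problem
\[
    v_s=\lapih v+\oo{h-1}\,v\ \text{ in }\Omega\times(0,\infty),\qquad v=0\ \text{on }\partial\Omega,\qquad v(\cdot,0)=u(\cdot,1).
\]
Since $H(v)=\oo{h-1}v$ meets the hypotheses preceding Theorem~\ref{T:exist}, this problem is well posed and enjoys a comparison principle. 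Its bounded stationary states are exactly the solutions of $-\lapih\Phi=\oo{h-1}\Phi$ in $\Omega$, $\Phi=0$ on $\partial\Omega$; equivalently, $(t-\tau)^{-\oo{h-1}}\Phi(x)$ is a separate‑variables solution of \eqref{E:main}. I will show $v(\cdot,s)$ converges, as $s\to\infty$, to the unique positive such profile, which is then $F_\Omega$, and the statement for $u$ follows by undoing the change of variables.

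First I would trap $v(\cdot,0)=u(\cdot,1)$ between two ordered stationary barriers. Since $u_0\geq0$, $u_0\not\equiv0$, there is a ball on which $u_0$ is bounded below by a positive constant; comparing $u$ from below with an explicit separate‑variables subsolution supported in a small concentric ball $B_\delta(x_0)\subset\Omega$ shows $u(\cdot,1)>0$ on $\overline{B_\delta}(x_0)$ and yields a stationary subsolution $\psi_-\geq0$ — the radial ball‑profile extended by zero, suitably scaled down; it is a viscosity subsolution across its free boundary because $\nabla\psi_-$ vanishes there — with $\psi_-\leq u(\cdot,1)$ and $\psi_->0$ in $B_\delta(x_0)$. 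For the upper barrier I would take a large ball $B_{R'}\supset\Omega$ and let $\psi_+$ be the restriction to $\Omega$ of its radial profile, an exact solution of the stationary equation with $\psi_+\geq0$ on $\partial\Omega$; since this profile has arbitrarily large height as $R'\to\infty$ (cf. the radial solutions of Section~\ref{S:special-sol}), one may arrange $\psi_+\geq u(\cdot,1)$ on $\overline{\Omega}$. Thus $\psi_-\leq u(\cdot,1)\leq\psi_+$.

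Next, let $v_\mp$ solve the rescaled problem with data $\psi_\mp$. As $\psi_-$ is a stationary subsolution and $\psi_+$ a stationary supersolution of the rescaled equation, comparison together with the autonomy of the equation forces $v_-(\cdot,s)$ to be nondecreasing and $v_+(\cdot,s)$ nonincreasing in $s$, with $\psi_-\leq v_-(\cdot,s)\leq v(\cdot,s)\leq v_+(\cdot,s)\leq\psi_+$. Using the modulus of continuity of Theorem~\ref{T:exist}, uniform in $s$ along these monotone orbits, and Dini's theorem, $v_\mp(\cdot,s)\to e_\mp$ uniformly on $\overline{\Omega}$, where $e_\mp$ are continuous viscosity solutions of the stationary problem with $0\not\equiv e_-\leq e_+$, hence $e_\mp>0$ in $\Omega$ by the strong maximum principle for $\lapih$. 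Therefore $e_-(x)\leq\liminf_{s\to\infty}v(x,s)\leq\limsup_{s\to\infty}v(x,s)\leq e_+(x)$, and it remains to show $e_-=e_+$. Here the $h$‑homogeneity is decisive: with $\mu_*\bydef\inf\{\mu\geq1:\mu e_-\geq e_+\text{ in }\Omega\}$, either $\mu_*=1$ and $e_-=e_+$, or $\mu_*>1$, in which case $\mu_* e_-$ is a \emph{strict} supersolution, since $-\lapih(\mu_* e_-)=\mu_*^{\,h-1}\,\oo{h-1}\,(\mu_* e_-)>\oo{h-1}\,(\mu_* e_-)$, lying above the solution $e_+$ and touching it at an interior point, which a strong comparison principle excludes. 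Setting $F_\Omega\bydef e_-=e_+$, we obtain a continuous, positive function on $\Omega$ vanishing on $\partial\Omega$ with $\lim_{t\to\infty}t^{\oo{h-1}}u(x,t)=F_\Omega(x)$, in fact uniformly in $x$.

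The main obstacle is the last step. One needs a strong comparison / strong maximum principle for the degenerate, non‑divergence, non‑variational operator $\lapih$ robust enough to preclude a strict supersolution touching a solution at an interior point — delicate already for the $\infty$‑Laplacian — together with the uniform‑in‑$s$ regularity that makes the monotone limits $e_\mp$ genuine continuous viscosity solutions rather than merely semicontinuous functions; a Harnack‑type inequality, or a barrier argument adapted to $\lapih$, should provide what is required. The barrier construction above is explicit but relies on the ODE analysis of the profile equation — equivalently the one‑dimensional $p$‑Laplacian with $p=h+1$ — carried out in Section~\ref{S:special-sol}.
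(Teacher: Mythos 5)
Your reduction to the autonomous problem is the same first move as the paper's (there $v(x,s)=(h-1)^{\oo{h-1}}e^{s}u(x,e^{(h-1)s})$ solves $v_s-\lapih v=v$), and your barriers are essentially the paper's friendly-giant profiles. But the core of your argument has a genuine gap, and you have in effect flagged it yourself: your proof of convergence of $v(\cdot,s)$ hinges on $e_-=e_+$, i.e.\ on uniqueness (up to the sandwich) of the positive solution of $-\lapih\Phi=\oo{h-1}\Phi$, and the sliding argument you sketch does not close. First, since $e_-$ and $e_+$ both vanish on $\del\Omega$, the quantity $\mu_*=\inf\{\mu\geq 1:\mu e_-\geq e_+\}$ need not be finite, and even if it is, the contact between $\mu_*e_-$ and $e_+$ may occur only at the boundary; ruling that out requires a Hopf-type boundary estimate or boundary Harnack inequality for this degenerate, non-divergence operator, which is not available in the paper. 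Second, even granting an interior touching point, excluding it requires a strong comparison principle (strict supersolution cannot touch a subsolution from above) for $\lapih$, which is delicate already for the $\infty$-Laplacian and is nowhere established here. So the decisive step is asserted, not proved. There is also a small factual error in the lower barrier: the gradient of the radial bump does \emph{not} vanish on the sphere where it hits zero (there $X_*'=-\oo{\kappa}\neq 0$), so ``$\nabla\psi_-$ vanishes there'' is false; the extension by zero is a viscosity subsolution for the different reason that it is the maximum of two subsolutions (no $C^{2}$ test function touches a convex kink from above).

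The paper avoids the uniqueness question altogether, and this is the key difference. By the B\'enilan--Crandall homogeneity estimate \eqref{E:BC-estimate}, $t^{\oo{h-1}}u(x,t)$ is \emph{monotone nondecreasing in $t$} for nonnegative solutions; combined with the upper bound from the friendly-giant comparison (Theorem~\ref{T:Dirichlet-decay-rate}) this gives pointwise convergence of $v(x,s)$ to some $G_\Omega(x)$ without any rigidity of stationary states, positivity of the limit comes from the adaptation of \cite[Lemma~3.2]{LS}, and continuity from the uniform modulus of continuity of Theorem~\ref{T:exist}; only afterwards is the limit identified as a solution of the eigenvalue problem (Theorem~\ref{T:evp}). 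If you want to salvage your route, either import that monotonicity estimate (which makes the two-sided barriers and the uniqueness step unnecessary for the existence of the limit), or supply the missing strong comparison/Hopf machinery for $\lapih$, which would be a substantial result in its own right and is not in the paper.
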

\begin{rmk}[1]
Naturally, the result holds for constant boundary data with initial condition above (below) this constant, since adding a constant
to $u$ still gives a solution of the problem.
\end{rmk}
\begin{rmk}[2] We will show (Theorem~\ref{T:evp}) that the function $G_\Omega \bydef (h-1)^{\oo{h-1}} F_\Omega$ is a positive solution of
the eigenvalue problem
\[
    -\lapih G =  G.
\]
or
\[
	-\lapi G =  \frac{G}{|DG|^{h-1}}.
\]
This is a variation of the eigenvalue problem for the infinity-Laplacian studied in \cite{J},
\[
	-\lapi G = \lambda G
\]
and a variation of the nonlinear eigenvalue problem considered in \cite{PV}
\[
	-\lapi G = \lambda G^p, \qquad 0<p<1
\]
(in connection with this, \cite{JR} considers ``large solutions'' of this equation with $p>1$).
Note also that the function $(x,t) \mapsto t^{\oo{h-1}} F_\Omega(x)$, which we call the friendly giant, is a generalization
of the solutions of the form \eqref{E:friendly-giant} for the radial case.
\end{rmk}

\noindent{\bf Notations}

We always assume $\Omega \subset \Rd$ is open and denote by $Q$ a solid cylinder of the form $Q\times(t_0,t_1)$,
$t_0 < t_1$. We will also separate the parabolic boundary of $Q$ into a lateral boundary and the base,
$\del_p Q = \del_l Q \cup \del_b Q$, with $\del_l Q = \del\Omega \times (t_0,t_1)$ and $\del_b Q =
\bar{Q}\times\{t_0\}$.

\setcounter{tocdepth}{2}
\tableofcontents
\begin{center}
\rule{42em}{0.1ex}
\end{center}

\section{Viscosity solutions}\label{S:viscosity_solutions}

We start our analysis by defining viscosity solutions for Equation
\eqref{E:main}. Note that since  the equation is singular for
$h\in(1,3)$ at points where the gradient of the function vanishes,
the usual definition of viscosity solution needs to be adapted at
the points of singularity. For the singular case we could use the
definition from \cite{CGG} (see also \cite{JK} for  the case
$h=1$). However, the singularity is removable, and since we are
not considering here $h=1$ we can simply recast Equation
\eqref{E:main} as follows:

We  can write  this equation as
\begin{equation}
u_t + F_h(D^2u,Du) = 0 \quad \mbox{with }  \ F_h(M,p) \bydef
-|p|^{h-3}(Mp)\cdot p
\end{equation}
and $F_h: \mathcal{S}_d \times (\Rd\setminus\{0\}) \to \R$, where
$\mathcal{S}_d$ is the set of $d\times d$ real symmetric matrices.
Since, even for $1<h<3$, $\lim_{|p|\to 0} F_h(M,p) = 0$ for every
$M\in\mathcal{S}_d$, we can take the continuous extension of
$F_h$,
\[
    \overline{F}_h(M,p) \bydef \begin{cases}
        F_h(M,p) & \text{if $p\ne 0$},\\
        0 & \text{if $p=0$},
    \end{cases}
\]
and the usual theory of viscosity solutions can be applied to $\overline{F}_h$. For simplicity we abuse notation and
write $F_h$ to denote its continuous extension $\overline{F}_h$.
\begin{defn}[Viscosity Solutions]\label{D:visc-sol}
An upper [lower] semicontinuous function $u:Q \to \R$ is a \textbf{viscosity subsolution}
[\textbf{supersolution}] of \eqref{E:main} in $Q$ if and only if for every $P_0\in Q$ and every function
$\phi\in C^{2,1}(Q)$ touching $u$ from above [below] at $P_0$, that is,
\begin{equation}\label{C:tfa}\begin{aligned}
    u(P_0) &= \phi(P_0) &&\text{and} \\
    u(P) &< \phi(P)\quad [u(P) > \phi(P)] &&\text{for every $P\in Q\setminus \{P_0\}$},
\end{aligned}\end{equation}
we have
\begin{equation}\label{E:is-visc-sol}
    \phi_t + F_h(D^2\phi,D\phi) \leq 0 \quad  [\phi_t + F_h(D^2\phi,D\phi) \geq0].
\end{equation}
A continuous function $u\in C(Q)$ is a \textbf{viscosity solution} when it satisfies both inequalities.
\end{defn}

Let us also record here the comparison property \cite[Theorem~8.2]{CIL} that directly applies to
\eqref{E:main}--\eqref{E:main-bc} (see also \cite[Theorem~2.3]{AS}).
\begin{theorem}[Comparison Principle]\label{T:comparison}
Let $u$ be an upper semicontinuous viscosity subsolution, and $v$ be a lower semicontinuous viscosity supersolution
of \eqref{E:main} such that $u \leq v$ on $\del_p Q$. Then $u \leq v$ in the whole cylinder $Q$.

Furthermore, if $u$ and $v$ are viscosity solutions of \eqref{E:main}, then
\[
    \max_Q |u-v| \leq \max_{\del_p Q} |u-v|.
\]
\end{theorem}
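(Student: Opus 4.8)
The plan is to derive both assertions from the general parabolic comparison theorem \cite[Theorem~8.2]{CIL} (whose proof is exactly the doubling-of-variables argument recalled below), so that all that really needs checking is that the continuously extended nonlinearity $F_h$ fits the abstract framework. Recall that $F_h(M,p)=-|p|^{h-3}(Mp)\cdot p$ for $p\neq 0$, $F_h(M,0)\bydef 0$, and that $F_h$ depends neither on $(x,t)$ nor on the zero-order unknown. It is continuous on $\mathcal S_d\times\Rd$: away from $p=0$ this is clear, while for fixed $M$ one has $|F_h(M,p)|\le |p|^{h-1}\|M\|\to 0$ as $p\to 0$ (here $h>1$ is used), so $F_h$ is continuous at $(M,0)$. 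It is degenerate elliptic (proper): if $M\le N$ then $(Mp)\cdot p\le (Np)\cdot p$, hence $F_h(N,p)\le F_h(M,p)$; and, being independent of the unknown, $F_h$ is trivially nondecreasing in that argument, which is all the parabolic theory demands.

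The decisive point is the structure condition \cite[(3.14)]{CIL}. Because $F_h$ carries no explicit $(x,t)$-dependence, this condition reduces to the following: for the matrices $X,Y\in\mathcal S_d$ and the vector $q\in\Rd$ furnished by the (parabolic) Jensen--Ishii lemma — for which in particular $\langle(X-Y)\xi,\xi\rangle\le 0$ for all $\xi$, i.e. $X\le Y$ — one must have $F_h(Y,q)-F_h(X,q)\le\omega(\cdot)$ for some modulus $\omega$. But
\[
    F_h(Y,q)-F_h(X,q)=|q|^{h-3}\bigl(\langle Xq,q\rangle-\langle Yq,q\rangle\bigr)=|q|^{h-3}\,\langle(X-Y)q,q\rangle\le 0,
\]
and this remains valid at $q=0$, where both sides vanish, so the structure condition holds with $\omega\equiv 0$. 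In other words, the $|Du|$-degeneracy — singular when $h<3$, vanishing when $h>3$ — is harmless precisely because the doubling procedure forces the \emph{same} gradient variable $q=\alpha(\hat x-\hat y)$ into the subsolution and the supersolution inequalities, so the scalar factor $|q|^{h-3}$ cancels.

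Granting these checks, the argument is routine. Assuming $\sup_Q(u-v)>0$, one penalizes with $\tfrac{\alpha}{2}|x-y|^2+\tfrac{\alpha}{2}|t-s|^2$ together with the usual small device ($-\eta t$ or $-\eta/(T-t)$) turning $u$ into a strict subsolution, localizes the supremum at an interior point $(\hat x,\hat t,\hat y,\hat s)$, applies the parabolic Jensen--Ishii lemma to obtain $(a,q,X)$ in the closure of the parabolic superjet of $u$ at $(\hat x,\hat t)$ and $(b,q,Y)$ in the closure of the parabolic subjet of $v$ at $(\hat y,\hat s)$, with $q=\alpha(\hat x-\hat y)$, $X\le Y$ and $a-b\ge\eta>0$, and subtracts the two viscosity inequalities to get $a-b\le F_h(Y,q)-F_h(X,q)\le 0$, a contradiction; hence $u\le v$ in $Q$. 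The two-sided estimate then follows because adding a constant to a (sub/super)solution yields another (sub/super)solution: applying what was just proved to the pairs $(u,\,v+c)$ and $(v,\,u+c)$ with $c\bydef\max_{\del_p Q}|u-v|$ gives $u-v\le c$ and $v-u\le c$ throughout $Q$. I expect no deep obstacle here — which is why the statement can simply be ``recorded'' — the only point needing any care being the singularity of $\lapih$ at $Du=0$ for $1<h<3$, dispatched as above by the removable-singularity extension and the gradient cancellation in the doubling; and, when $\Omega$ is unbounded (so that \cite[Theorem~8.2]{CIL} does not apply verbatim), the additional localization of the penalized maximum to a compact set, which uses the decay at infinity of the sub- and supersolutions relevant to Theorem~\ref{T:exist}.
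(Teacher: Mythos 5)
Your proposal is correct and follows essentially the same route as the paper, which simply records the theorem as a direct application of \cite[Theorem~8.2]{CIL} (see also \cite[Theorem~2.3]{AS}); your verification that the extended $F_h$ is continuous, proper, and satisfies the structure condition with $\omega\equiv 0$, together with the constant-shift trick for the two-sided bound, is precisely what that citation relies on. Your closing remark on the extra localization needed when $\Omega$ is unbounded is a reasonable caveat that the paper leaves implicit.
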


\begin{rmk}
Equation \eqref{E:main} has a few symmetries which we will explore. Whenever $u(x,t)$ is a classical solution, it
is immediate to check that for any orthogonal $d\times d$ matrix $O$, for any $(x_o,t_o)$ in $\Rd\times\R$, $c\in\R$ and
any $\lambda$ and $s$ positive,
\begin{equation}\label{E:sym}
\begin{aligned}
    v(x,t) =& u\left(O(x-x_o),t-t_o\right) + c \\
    w(x,t) =& \lambda u(sx,\lambda^{h-1}s^{h+1} t) \\
    z(x,t) =& -u(x,t)
\end{aligned}
\end{equation}
are also solutions in an appropriate domain. As usual, the argument can be transposed to viscosity solutions.
\end{rmk}

\section{Special solutions}\label{S:special-sol}

\subsection{Similarity solutions}
We need to obtain certain explicit solutions. Let us first look at similarity solutions of \eqref{E:main}. We
are mainly interested in radial solutions, $u(x,t) = v(r,t)$. It is easy to see that $v$ should solve
\[
    v_t = (v_r)^{h-1} v_{rr} = \oo{h}\left( (v_r)^h\right)_r,
\]
which is a 1-dimensional p-Laplacian equation with $p = h+1$. Similarity solutions for this equation are well
known. In fact, if we take $v$ of the form
\[
    v(r,t) = t^\alpha f(y), \quad \text{with} \quad y = rt^{\beta}.
\]
Then, we get for $f$ the equation
\[
    t^{\alpha-1} \left[ \alpha f(y) + \beta y f'(y) \right]
    = t^{\alpha h + \beta(h+1)} \left(f'(y)\right)^{h-1} f''(y).
\]
Choosing $\alpha = \beta = -\oo{2h}$ we obtain
\[
    \alpha y f(y) = \oo{h} \left( f'(y) \right)^h + C.
\]
If we take $C=0$ we can integrate once more to get (where $C$ is a differenc constant)
\[
    f(y) = \left[ C - \left(\oo{2}\right)^{\oo{h}}\frac{h-1}{h+1} y^{\frac{h+1}{h}} \right]^{\frac{h}{h-1}}_+,
\]
which leads to the usual similarity solution
\begin{equation}\label{E:barenblatt}
    B_{R,h}(x,t) = B_h(x,t) \bydef c_h\, t^{-\oo{2h}} \left[ R^{\frac{h+1}{h}} - t^{-\frac{h+1}{2h^2}}
        |x|^{\frac{h+1}{h}} \right]^{\frac{h}{h-1}}_+,
\end{equation}
where
\[
    c_h = \left(\oo{2}\right)^{\oo{h-1}} \left( \frac{h-1}{h+1} \right)^{\frac{h}{h-1}}.
\]
Note that $R$ denotes the radius of the positivity set of $B_{R,h}$ at time $t=1$.
It is straightforward to prove this defines a viscosity solution.
\begin{prop}\label{P:barenblatt-is-visc}
The function $B_{R,h}$ defined in \eqref{E:barenblatt} is a viscosity solution of \eqref{E:main} in $\Rd\times
(0,\infty)$.
\end{prop}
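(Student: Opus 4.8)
The plan is to verify the two viscosity inequalities directly, exploiting the explicit structure of $B_{R,h}$. Write $B = B_{R,h}$ and note that $B$ is smooth and strictly positive on the open "parabola" $\calP = \{(x,t) : |x|^{(h+1)/h} < R^{(h+1)/h}\, t^{(h+1)/(2h^2)}\}$, it vanishes identically outside $\overline{\calP}$, and it is continuous across $\del\calP$. Away from $\del\calP$ there is nothing to prove: on the interior of the positivity set $B$ is a classical solution of $u_t = \lapih u$ (this is exactly the computation in Section~\ref{S:special-sol}, rewritten in the radial variable, where one checks $v_t = (v_r)^{h-1}v_{rr}$ holds for $v(r,t)=t^{-1/(2h)}f(rt^{-1/(2h)})$), and on the interior of $\{B=0\}$ the function is $\equiv 0$, trivially a classical solution since $F_h(0,0)=0$. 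So the only points requiring attention are the free-boundary points $P_0 \in \del\calP$, where $B(P_0)=0$.

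The key observation at a free-boundary point is the behaviour of $DB$. Since $f(y) = [\,C - c\, y^{(h+1)/h}\,]_+^{h/(h-1)}$ with exponent $\tfrac{h}{h-1} > 1$, the radial derivative $v_r$ vanishes as one approaches the free boundary from inside $\calP$; hence $DB(P_0) = 0$ and moreover $DB \to 0$ as $P \to P_0$. This is what makes the subsolution test easy and the supersolution test the delicate one.

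For the \textbf{supersolution} inequality: suppose $\phi \in C^{2,1}$ touches $B$ from below at $P_0 = (x_0,t_0) \in \del\calP$. Since $B \geq 0 = B(P_0)$ and $\phi \leq B$ with equality at $P_0$, the function $\phi$ has a local minimum at $P_0$, so $\phi_t(P_0) = 0$ and $D\phi(P_0) = 0$, and $D^2\phi(P_0) \geq 0$. Then $\phi_t(P_0) + F_h(D^2\phi(P_0), D\phi(P_0)) = 0 + \overline F_h(D^2\phi(P_0), 0) = 0 \geq 0$ by the very definition of the continuous extension $\overline F_h$. So the supersolution condition holds with equality and requires no real work — the removability of the singularity is doing everything.

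For the \textbf{subsolution} inequality: suppose $\phi \in C^{2,1}$ touches $B$ from above at $P_0 \in \del\calP$. Here $\phi \geq B \geq 0$ with $\phi(P_0) = 0$, so again $P_0$ is a local minimum of $\phi$, giving $D\phi(P_0) = 0$, $\phi_t(P_0) = 0$, $D^2\phi(P_0) \geq 0$, and as before $\phi_t(P_0) + F_h(D^2\phi(P_0),D\phi(P_0)) = \overline F_h(D^2\phi(P_0),0) = 0 \leq 0$. Thus both inequalities reduce to the identity $\overline F_h(M,0) = 0$ once one knows that a $C^{2,1}$ function squeezed between $0$ and a nonnegative function vanishing at $P_0$ has a critical point there. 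The main (and essentially only) obstacle is therefore the bookkeeping: one must confirm that $DB(P_0)=0$ at every free-boundary point — i.e. that the exponent $\tfrac{h}{h-1}>1$ genuinely forces $C^1$-flatness of $B$ across $\del\calP$ in both space and time — and then handle the trivial interior cases; the free-boundary analysis itself collapses because the operator is degenerate precisely where $B$ is flat. I would organize the write-up as: (i) $B$ is a classical solution on $\calP$; (ii) $B \equiv 0$ and hence a classical solution on $(\overline{\calP})^c$; (iii) $DB, \partial_t B \to 0$ at $\del\calP$, so $B \in C^1$ and any touching test function has a critical point at a free-boundary point; (iv) conclude via $\overline F_h(\,\cdot\,,0)\equiv 0$.
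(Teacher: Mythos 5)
There is a genuine gap: your decomposition of $\Rd\times(0,\infty)$ into ``interior of the positivity set'' and ``interior of the zero set'' plus the free boundary misses the axis $\{x=0\}$, and that is where the real work lies. Inside its support, $B_{R,h}(x,t)=c_h t^{-1/(2h)}\bigl(R^{(h+1)/h}-t^{-(h+1)/(2h^2)}|x|^{(h+1)/h}\bigr)^{h/(h-1)}$ involves $|x|^{(h+1)/h}$ with exponent $(h+1)/h\in(1,2)$, so $B_{R,h}$ is \emph{not} twice differentiable at $x=0$; the paper's singular set is $W=\{A(|x|,t)=0\}\cup\{x=0\}$, not just the free boundary. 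At a point $(0,t_0)$ the supersolution test is anything but automatic: there $DB_{R,h}=0$ but $\partial_t B_{R,h}(0,t_0)=-\tfrac{c_h}{2h}t_0^{-(2h+1)/(2h)}R^{(h+1)/(h-1)}<0$, so if a $C^{2,1}$ function $\phi$ touched $B_{R,h}$ from below there, first-order matching would force $D\phi=0$ and $\phi_t=\partial_t B_{R,h}<0$, and then $\phi_t+F_h(D^2\phi,0)=\phi_t<0$, violating the supersolution inequality. The property survives only because no such test function exists: since $B_{R,h}(x,t_0)-B_{R,h}(0,t_0)\sim -C|x|^{(h+1)/h}$ with $(h+1)/h<2$, the quotient by $|x|^2$ tends to $-\infty$, which is exactly the contradiction argument the paper runs. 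Your claim that ``the supersolution test requires no real work'' is therefore false at the axis, and your proposal never addresses these points at all (note the paper's remark that for $h\in(1,2]$ the axis is in fact the \emph{only} singular set, so in that range your argument covers none of the delicate points).

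Two secondary remarks. First, at free-boundary points your justification for the supersolution case is off: from $\phi\leq B_{R,h}$, $B_{R,h}\geq 0$ and equality at $P_0$ you cannot conclude that $\phi$ has a local minimum at $P_0$ (nothing prevents $\phi<0$ nearby); the correct and easy route is that $B_{R,h}$ is $C^1$ with $DB_{R,h}=\partial_t B_{R,h}=0$ on the free boundary, so touching forces $D\phi(P_0)=0$, $\phi_t(P_0)=0$, after which $\overline{F}_h(\cdot,0)=0$ finishes as you say (this slip is fixable, unlike the missing axis). Second, your subsolution discussion should also be extended to the axis, where $\phi_t(P_0)=\partial_t B_{R,h}(0,t_0)<0$ gives the inequality $\phi_t+F_h\leq 0$ directly, as in the paper.
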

\begin{proof}
Let us for convenience define $A(r,t) = \left[ R^{\frac{h+1}{h}} - t^{-\frac{h+1}{2h^2}}
r^{\frac{h+1}{h}} \right]_+$. We have
\[\begin{aligned}
    \del_t B_h(x,t) &= -\frac{c_h}{2h} t^{-\frac{2h+1}{2h}} A^{\frac{h}{h-1}} + \frac{c_h(h+1)}{2(h-1)h}
        t^{-\frac{2h(h+1)+1}{2h^2}} r^{\frac{h+1}{h}} A^{\oo{h-1}}, \\
    D B_h(x,t) &= -\frac{c_h(h+1)}{h-1} t^{-\frac{2h+1}{2h^2}} r^{\oo{h}} A^{\oo{h-1}} \frac{x}{|x|}, \\
    D^2 B_h(x,t) &= \frac{c_h(h+1)^2}{h(h-1)^2} t^{-\frac{3h+2}{2h^2}} r^{2/h} A^{-\frac{h-2}{h-1}}
            \frac{x\otimes x}{|x|^2} \\
        &\phantom{=} + \frac{c_h(h+1)}{h-1} t^{-\frac{2h+1}{2h^2}} r^{-\frac{h-1}{h}} A^{\oo{h-1}} \left(
            \frac{h-1}{h} \frac{x\otimes x}{|x|^2} - I \right).
\end{aligned}\]
From this computation we see that the set of points where $B_h$ is not twice differentiable is $W \bydef \left\{
(x,t) \in \Rd\times(0,\infty) \mid A(|x|,t) = 0 \; \text{or} \; x=0 \right\}$.
Then, from the construction, $B_h$ is a classical solution of \eqref{E:main} in $\Rd\times (0,\infty) \setminus
W$. To check $B_h$ is a viscosity solution we just have to verify \eqref{E:is-visc-sol} at the points of $W$.
Note also that $B_h$ is $C^1$ everywhere and $DB_h = 0$ on $W$.

Let $P_0 = (x_0,t_0) \in W$ be arbitrary and assume $\phi$ touches $u$ from above at $P_0$, that is, it
satisfies \eqref{C:tfa}. Since $B_h$ is $C^1$, it must be that $\phi_t(P_0) = \del_t B_h(P_0)$ and $D\phi(P_0) =0$.
If $x_0 \ne 0$, then $A(|x_0|,t_0) = 0$ and $\phi_t(P_0) = 0$, otherwise, when $x_0 = 0$, we have $\phi_t(P_0) =
-\frac{c_h}{2h} t_0^{-\frac{2h+1}{2h}} R^{\frac{h+1}{h-1}}$. In any case we have
\[
    \phi_t(P_0) \leq 0 = -F_h(D^2\phi(P_0),D\phi(P_0))
\]
and \eqref{E:is-visc-sol} is satisfied.

Assume now $\phi$ touches $u$ from below at $P_0$. We claim that the function $B_h$ can not be touched from below
by a $C^2$ function at a point $(0,t_0)$. If this were the case we would have
\[\begin{aligned}
    c_h t^{-\oo{2h}} A(|x|,t)^{\frac{h}{h-1}} &- c_h t_0^{-\oo{2h}} R^{\frac{h+1}{h-1}} = B_h(x,t)-B_h(P_0)
        \geq \phi(x,t) - \phi(0,t_0) \\
        &\geq \phi_t(x,t) (t-t_0) + \oo{2} (D^2\phi(P_0)\, x)\cdot x + o(|x|^2) + o(t)
        \;\;\;\text{as $(x,t)\to P_0$}.
\end{aligned}\]
Taking $t=t_0$ and $x = \lambda e$ where $\lambda > 0$ and $e$ is any unit vector in $\Rd$ we obtain
\[
    c_h t_0^{-\oo{2h}} \left( A(\lambda,t_0)^{\frac{h}{h-1}} - R^{\frac{h+1}{h-1}} \right) \geq
        \frac{\lambda^2}{2} (D^2\phi(P_0)\, e)\cdot e + o(\lambda^2) \;\;\;\text{as $(x,t)\to P_0$}.
\]
Since
\[
    \lim_{\lambda\downarrow 0} \frac{A(\lambda,t_0)^{\frac{h}{h-1}} - R^{\frac{h+1}{h-1}}}{\lambda^2} = -\infty
\]
this leads to a contradiction and the claim is proved.

It follows that $P_0$ must satisfy $A(|x_0|,t_0) = 0$. At points where this occurs we have $\phi_t(P_0) =
\del_tB_h(P_0) = 0$ and $D\phi(P_0) = DB_h(P_0) = 0$, hence $\phi_t(P_0) + F_h(D^2\phi(P_0),D\phi(P_0)) = 0$ and
\eqref{E:is-visc-sol} is again satisfied.
\end{proof}
\begin{rmk}
Observe that for $h\in(1,2]$ the only points where $B_h$ is not twice differentiable are of the form $(0,t)$, hence
our proof can be simplified in these cases.
\end{rmk}

\subsection{Separation of variables}
Let us now look for solutions of the form
\[
    u(x,t) = T(t)X(r),
\]
where $r = |x|$. From equation \eqref{E:main} we must have
\[
    \frac{T'}{|T^{h-1}|T} = \frac{|X'|^{h-1}X''}{X} = -m
\]
for some $m\in\R$. Using the second symmetry in \eqref{E:sym}, we can rescale this equation to choose the
value of $m$ that better suits us. It is convenient to take $m = \pm \oo{h-1}$.

\subsubsection{Case $m = \oo{h-1}$} We can immediately integrate $T$ and obtain for an arbitrary $t_0\in\R$
\[
    T(t) = \oo{(t-t_0)^{\oo{h-1}}}.
\]
To integrate $X$ we use the change of variable
\[
    r(s) = \kappa \int_0^s \left(\sin(\sigma)\right)^{\alpha} \,d\sigma,
\]
where $\alpha = \frac{h-1}{h+1}$ and $\kappa^{h+1} = 2\alpha$ (a similar change of variable is performed
in \cite{AJK} in the case $h=3$). Since $r'(s) > 0$ for $s\in(0,\pi)$, this transformation is invertible from
$[0,\pi]$ to $[0,\overline{R}]$, where
\[
    \overline{R} = \kappa \int_0^\pi \left(\sin(\sigma)\right)^{\alpha} \,d\sigma.
\]
Hence, its inverse function $s:[0,\overline{R}]\to[0,\pi]$ is well defined, strictly increasing, $s(0) = 0$,
$s(\overline{R}) = \pi$ and $s'(r) = \kappa^{-1}\left( \sin\left(s(r)\right)\right)^{-\alpha}$. Let us define
$X_* :[0,\overline{R}] \to \R$ by
\[
    X_*(r) \bydef \cos(s(r)).
\]
We have
\[
    X_*'(r) = - \oo{\kappa}\sin^{1-\alpha}(s(r)), \quad
        X_*''(r) = -\frac{1-\alpha}{\kappa^2} \sin^{-2\alpha}(s(r))\cos(s(r)).
\]
Since $\alpha \in (0,1)$, the function $X_*$ is $C^1[0,\overline{R}]$ and $C^2(0,\overline{R})$. We can extend $X_*$ to
$\R^+_0$ by reflection,
\begin{equation}\label{E:X}
    X(r) = \begin{cases}
        X_*(r-2k\overline{R}) &\text{if $r\in[2k\overline{R},(2k+1)\overline{R})$ for $k\in\mathbb{N}_0$}, \\
        X_*(2k\overline{R}-r) &\text{if $r\in[(2k-1)\overline{R},2k\overline{R})$ for $k\in\mathbb{N}$}.
    \end{cases}
\end{equation}
Then $X$ is $2\overline{R}$-periodic $C^1$ function as is the function $|X'|^{h-1}X'$. Furthermore, the latter satisfies
\[
    \left[ |X'(r)|^{h-1}X'(r) \right]' = -\frac{1-\alpha}{\kappa^{h+1}}X(r) = -mX(r)
\]
even at the points where $X$ is not twice differentiable, that is, the points of the form $k\overline{R}$, with
$k\in\mathbb{N}_0$.
\begin{prop}
For any $t_0\in\R$ and any $r_0>0$, the function
\begin{equation}\label{E:friendly-giant}
    S(x,t;r_0,t_0) = \frac{X_{r_0}(x)}{(t-t_0)^{\oo{h-1}}},
\end{equation}
where $X_r(x) = \left( \frac{2r}{\overline{R}} \right)^{-\frac{h+1}{h-1}} X\left(\frac{\overline{R}|x|}{2r}\right)$,
with $X$ given by \eqref{E:X}, is a viscosity solution of \eqref{E:main} in $\Rd\times(t_0,\infty)$. In particular,
the restriction of $S$ to $Q_{r_0,t_0}(0) \bydef B_{r_0}(0)\times(t_0,\infty)$ is a positive
solution of the homogeneous Dirichlet problem in this set.
\end{prop}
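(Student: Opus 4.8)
The plan is to reduce the claim about $S$ to the already-established one-dimensional facts about $X$ and $T$, checking the viscosity inequalities by hand at the bad points. First I would observe that $S$ is built from the radial profile $X$ and the time factor $T(t)=(t-t_0)^{-1/(h-1)}$, which by the computations preceding the proposition satisfy the ODE relation $[|X'|^{h-1}X']' = -mX$ with $m = 1/(h-1)$, and that the rescaling $X_r(x) = (2r/\overline R)^{-(h+1)/(h-1)} X(\overline R|x|/(2r))$ is exactly the second symmetry in \eqref{E:sym} applied in the radial variable (one checks the exponents match: scaling $x\mapsto sx$ forces the prefactor $\lambda = s^{-(h+1)/(h-1)}$ so that the spatial profile remains a solution of the stationary radial equation with the same $m$). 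So it suffices to prove the statement for $r_0 = \overline R/2$, i.e. for $S(x,t) = X(|x|)/(t-t_0)^{1/(h-1)}$ on $\Rd\times(t_0,\infty)$, and then transport by the symmetry.

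Next I would identify the set of points where $S$ fails to be $C^{2,1}$. From the formulas for $X_*'$ and $X_*''$, $X$ is $C^1$ everywhere on $\R^+_0$ and $C^2$ except at the points $|x| = k\overline R$, $k\in\mathbb N_0$; adding the origin (where the radial-to-Cartesian passage can create a singularity) and noting $X'=0$ precisely at those points, the non-smooth set is $W = \{(x,t) : |x| \in \overline R\,\mathbb N_0\}$, and on $W$ we have $DS = 0$ while $\partial_t S$ is finite. Away from $W$, $S$ is a classical solution: this is the direct radial computation, using that for a radial function $v(|x|)$ one has $\lapih(v(|x|)) = |v'|^{h-1}v''$ at points where $v'\neq 0$, combined with the chosen ODE for $X$ and $T'/(T^{h-1}T) = -m$. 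The remaining work is the verification of \eqref{E:is-visc-sol} on $W$, and here I would follow the template of Proposition~\ref{P:barenblatt-is-visc} essentially verbatim.

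For a test function $\phi\in C^{2,1}$ touching $S$ from above at $P_0=(x_0,t_0')\in W$: since $S\in C^1$, necessarily $D\phi(P_0)=DS(P_0)=0$ and $\phi_t(P_0)=\partial_t S(P_0)$. When $|x_0|=k\overline R$ with $k\geq 1$ (or $x_0=0$ with $\overline R$ fixed and $X(0)$ involved), one reads off from $S = X(|x|)(t-t_0)^{-1/(h-1)}$ that $\partial_t S(P_0) \leq 0$ when $X(|x_0|)\geq 0$ — and the positivity/sign behaviour of $X$ on the relevant interval is exactly what makes this work; since also $F_h(D^2\phi(P_0),0)=0$, we get $\phi_t + F_h \leq 0$. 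For touching from below, the key obstruction — and the place I expect the only real subtlety — is ruling out smooth support from below at the origin and at the "corner" radii $|x_0|=k\overline R$: at those corners $X''$ blows up to $\mp\infty$ (the factor $\sin^{-2\alpha}$ with $\alpha\in(0,1)$), so the argument of Proposition~\ref{P:barenblatt-is-visc} applies, namely the difference quotient $(X(|x_0| + \lambda) - X(|x_0|))/\lambda^2 \to \pm\infty$ forces a contradiction with the quadratic lower bound from $\phi$; and at $x=0$ the radial corner of $X$ (first derivative of $X$ vanishing but the radial second derivative being $-\infty$ in the appropriate sense) similarly blocks a $C^2$ touch from below. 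Once smooth below-touching is excluded at those points, the only below-touching points in $W$ are the ones with $X'(|x_0|)=0$ but $X$ locally smooth, where both $\phi_t(P_0)=\partial_t S(P_0)$ and $D^2\phi(P_0)$ are constrained so that $\phi_t + F_h = 0$. Finally, having proved $S$ is a viscosity solution on $\Rd\times(t_0,\infty)$, the last sentence follows immediately: on $Q_{r_0,t_0}(0) = B_{r_0}(0)\times(t_0,\infty)$ the profile $X_{r_0}$ is positive in the interior (it equals $X_*$ composed with a diffeomorphism, $\cos s > 0$ on the relevant range before the first zero) and vanishes on $\partial B_{r_0}(0)$ since $s(\overline R/2\cdot 2) = s(\overline R)$... more precisely by the normalization $X_{r_0}$ reaches its first zero exactly at $|x|=r_0$, so $S$ restricted there is a positive solution of \eqref{E:main} with zero lateral boundary data, as claimed.
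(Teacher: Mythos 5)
Your overall route is the same as the paper's: reduce to $r_0=\overline R/2$ by the scaling symmetry in \eqref{E:sym}, note that $S$ is a classical solution off $\Sigma=\{(x,t):|x|=k\overline R,\ k\in\mathbb{N}_0\}$, that $S$ is $C^1$ with $DS=0$ on $\Sigma$, and verify \eqref{E:is-visc-sol} on $\Sigma$ by combining the sign of $S_t$ with the impossibility of a $C^2$ touch at radii where the profile's second derivative blows up. But the assembly of that last step is garbled, and it is exactly where the content of the proof lies. The corners split into two families: at even multiples $|x_*|=2k\overline R$ (including $x_*=0$) the profile has a maximum, $X=1$ and $X''\to-\infty$, so no $C^2$ function can touch $S$ from \emph{below} there (the profile behaves like $1-c\,||x|-|x_*||^{\gamma}$ with $\gamma\in(1,2)$), while a touch from above forces $D\phi(P_*)=0$ and $\phi_t(P_*)=S_t(P_*)<0$, giving the subsolution inequality; at odd multiples $|x_*|=(2k+1)\overline R$ the profile has a minimum, $X=-1$ and $X''\to+\infty$, so no $C^2$ function can touch $S$ from \emph{above} there, while a touch from below is perfectly possible (e.g.\ by a slightly bent constant) and forces $\phi_t(P_*)=S_t(P_*)>0$, giving the supersolution inequality.

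Your write-up does not implement this pairing, and as it stands there is a gap. In the touching-from-above case you never exclude the odd multiples: the sign condition you invoke (``$\partial_tS(P_0)\leq 0$ when $X(|x_0|)\geq 0$'') says nothing there because $X(|x_0|)=-1$, so the subsolution inequality is unproved precisely at the points where it could fail; the paper closes this by using $X''\to+\infty$ at those radii to rule out the touch altogether. Symmetrically, your touching-from-below paragraph excludes a $C^2$ touch from below at \emph{all} corner radii, which is false at the odd multiples (and, if it were true, would leave nothing to verify), and your concluding claim that the remaining below-touching points of $W$ have ``$X$ locally smooth'' and ``$\phi_t+F_h=0$'' is incorrect: $W$ is by construction the set where $X$ fails to be twice differentiable, and at an odd multiple one has $\phi_t(P_*)=S_t(P_*)$ equal to a positive multiple of $(t_*-t_0)^{-\frac{h}{h-1}}$ and $F_h(D^2\phi(P_*),0)=0$, so $\phi_t+F_h>0$, which is what actually yields the required $\geq 0$. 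With the two exclusion arguments attached to the correct families of corners, and the corresponding signs of $S_t$ used on the family that remains, your argument closes and coincides with the paper's; the reduction by symmetry, the classical computation off $\Sigma$, and the final positivity statement on $B_{r_0}(0)$ (since $s(\overline R/2)=\pi/2$, so $X>0$ on $[0,\overline R/2)$ and $X(\overline R/2)=0$) are fine.
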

\begin{proof}
Using the $x$-symmetry in \eqref{E:sym}, it is enough to consider $r_0 = \overline{R}/2$.
From the properties of $X$ deduced above, it is clear that $S$ is a classical, and consequently a viscosity solution
of \eqref{E:main} off the set $\Sigma \bydef \{ (x,t) \mid |x| = k\overline{R}, \; k\in\mathbb{N}_0 \}$. Moreover,
$S$ is $C^1$ in the whole space with $S_t(x,t) = -(t-t_0)^{-\frac{h}{h-1}} X(|x|)$ and $DS(x,t) =
(t-t_0)^{-\oo{h-1}}X'(|x|) \frac{x}{|x|}$.

We need to check that \eqref{E:is-visc-sol} is satisfied on $\Sigma$. To this effect, let $P_* = (x_*,t_*) \in \Sigma$,
$t_* > t_0$, and suppose $\phi \in C^{2,1}$ touches $S$ from above at $P_*$. As in the proof of
Proposition~\ref{P:barenblatt-is-visc}, since $X''(r) = +\infty$ at the points of the form $r = (2k+1)\overline{R}$,
$k\in\mathbb{N}_0$, we see that $|x_*|$ can not be of this form, hence $|x_*| = 2k\overline{R}$ for some
$k\in\mathbb{N}_0$. Now we have $D\phi(P_*) = DS(P_*) = 0$ and
\[
    \phi_t(P_*) + F(D^2\phi(P_*)) = -(t_*-t_0)^{-\oo{h-1}} < 0,
\]
which is \eqref{E:is-visc-sol}. The verification of the other half of Definition~\ref{D:visc-sol} is similar.
\end{proof}

\subsubsection{Case $m= -\oo{h-1}$}
In this case we obtain a blow-up solution with
\[
    T(t) = (t_0 - t)^{-\oo{h-1}},
\]
for $t < t_0$, and
\[
    X(r) = c_h r^{\frac{h+1}{h-1}},
\]
with $c_h$ as in \eqref{E:barenblatt}. In fact, we will need a slightly more general solution.
\begin{prop}\label{P:blow-up-sol}
For any $r_0,t_0\in\R$, the function
\[
    V(x,t;r_0,t_0) = \frac{c_h\left[|x|-r_0\right]]_+^{\frac{h+1}{h-1}}}{(t_0-t)^{\oo{h-1}}}
\]
is a viscosity solution of \eqref{E:main} in $\Rd\times(-\infty,t_0)$.
\end{prop}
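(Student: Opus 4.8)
The plan is to follow the proof of Proposition~\ref{P:barenblatt-is-visc}: realise $V$ as a separated product, check that it solves \eqref{E:main} classically off a small exceptional set, and verify \eqref{E:is-visc-sol} directly on that set. Writing $p\bydef\frac{h+1}{h-1}>1$, I would first put
\[
    V(x,t)=T(t)\,W(|x|),\qquad T(t)=(t_0-t)^{-\oo{h-1}},\qquad W(r)=c_h\,\bigl[\,r-r_0\,\bigr]_+^{\,p},
\]
which is exactly the product obtained in Section~\ref{S:special-sol}, Case $m=-\oo{h-1}$, with the radial profile $X(r)=c_h r^{p}$ replaced by its translate $W$. Since $p>1$, the profile $W$ is $C^1(\R)$ with $W'(r)=c_h p\,[\,r-r_0\,]_+^{\,p-1}\ge 0$, so for $r_0\ge 0$ the function $V$ is $C^1$ on $\Rd\times(-\infty,t_0)$, with $DV=T(t)W'(|x|)\tfrac{x}{|x|}$, $V_t=T'(t)W(|x|)$, and $\{V=0\}=\{|x|\le r_0\}$. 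The only computation needed is that $W$ still solves the radial ODE $(W')^{h-1}W''=\oo{h-1}W$ on $\{r>r_0\}$: with the critical exponent $p$ the powers in $(W')^{h-1}W''$ again combine to a multiple of $(r-r_0)^{p}$, and the constant $c_h$ is precisely the one for which that multiple is $\oo{h-1}W$ — the same one-line check already made for $X(r)=c_h r^{p}$. Together with $T'=\oo{h-1}T^{h}$ and the radial identity $\lapih(\psi(|x|))=|\psi'|^{h-1}\psi''$ (valid here since $\psi'=TW'\ge 0$), this gives $V_t=\lapih V$ at every point where $V$ is twice differentiable.

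Next I would locate the set $\Sigma$ where $V$ fails to be $C^2$. Off $\{x=0\}\cup\{|x|=r_0\}$ the function $V$ is smooth; at the origin it is smooth when $r_0>0$ and $C^2$ when $r_0=0,\ 1<h\le 3$; and across $\{|x|=r_0\}$ it is $C^2$ precisely when $1<h<3$ (for $h\ge 3$ the radial second derivative $W''$ jumps at $h=3$, or blows up to $+\infty$ at $h>3$). Thus $\Sigma\subseteq\{(x,t):|x|=r_0\}$ in every case; $V$ is a classical, hence viscosity, solution of \eqref{E:main} on the open set $(\Rd\times(-\infty,t_0))\setminus\Sigma$, and it remains only to verify Definition~\ref{D:visc-sol} at the points $P_*=(x_*,t_*)$ with $|x_*|=r_0$.

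This last check is immediate, and it is the structural point on which the proposition hinges: on $\{|x|=r_0\}$ one has $W(|x|)=W(r_0)=0$ and $W'(|x|)=W'(r_0)=0$, hence $V=0$, $DV=0$ and $V_t=T'W=0$ there, while $V$ remains $C^1$ across this set. Consequently any $\phi\in C^{2,1}$ touching $V$ from above or from below at $P_*$ must have $D\phi(P_*)=DV(P_*)=0$ and $\phi_t(P_*)=V_t(P_*)=0$ (extremum of $\phi-V$ at $P_*$ plus differentiability of $V$), so that
\[
    \phi_t(P_*)+F_h\bigl(D^2\phi(P_*),D\phi(P_*)\bigr)=F_h\bigl(D^2\phi(P_*),0\bigr)=0,
\]
which satisfies both inequalities in \eqref{E:is-visc-sol}. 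Hence $V$ is a viscosity solution of \eqref{E:main} in $\Rd\times(-\infty,t_0)$.

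The step I expect to demand the most care is the behaviour on $\{|x|=r_0\}$ when $h\ge 3$: there $V$ genuinely fails to be $C^2$, so one cannot simply invoke ``classical off a negligible set'', and the hands-on verification above — resting entirely on the fact that $V$, $DV$ and $V_t$ all vanish on that locus — is what makes it work (equivalently, for $h>3$ the blow-up of $D^2V$ prevents $V$ from being touched there from above by a $C^2$ function, rendering that half of Definition~\ref{D:visc-sol} vacuous). The only extra difficulty, if one insists on the whole range $r_0\in\R$, is that for $r_0<0$ the set $\{V=0\}$ is empty and the origin becomes a conical vertex of $V(\cdot,t)$ with strictly positive outward slope; the argument above then does not reach $x=0$, and the viscosity inequalities at that vertex would have to be examined separately. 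Since only $r_0\ge 0$ is used in the sequel, I would carry out the proof in that range.
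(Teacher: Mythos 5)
Your proposal is correct and takes essentially the same route as the paper, whose proof is only sketched by reference to Proposition~\ref{P:barenblatt-is-visc}: $V$ is classical away from the sphere $\{|x|=r_0\}$ (the "difficult points" the paper names), and there $V$, $DV$ and $V_t$ all vanish while $V$ remains $C^1$, so any admissible test function yields equality in \eqref{E:is-visc-sol}, making the check even simpler than in the Barenblatt case. Your caution about $r_0<0$ is well placed: the cone vertex at the origin is a genuine obstruction to the supersolution property there (test functions touching from below with nonzero gradient and large second derivative along it violate the inequality), so carrying out the argument for $r_0\geq 0$, which is all that is used later, is the right call and goes slightly beyond the paper's own one-line indication.
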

The proof is similar to the proof of the previous proposition, having as the difficult points the sphere $\{|x| = r_0\}$
when $r_0 > 0$.

\subsection{Traveling waves}
Let us finally look for solutions of the form
\[
    u(x,t) = f(x_1 - ct).
\]
The function $f(\eta)$ must satisfy
\[
    |f'|^{h-3} f' f'' = -c.
\]
Taking the constants of integration equal to zero (we can recover them using the symmetries in \eqref{E:sym}), and
ignoring the stationary solutions ($c=0$), we integrate twice to obtain
\[
    f(\eta) = \oo{|c| h} \left( - c(h-1)\eta \right)^{\frac{h}{h-1}}.
\]
Therefore, we obtain
\[
    u(x,t) = \frac{d_h}{|c|} \left( c^2 t - cx_1 \right)^{\frac{h}{h-1}},
\]
with $d_h = \frac{(h-1)^{\frac{h}{h-1}}}{h}$, valid for $\frac{x_1}{c} \leq t$. Since this function is $C^1$ up to
the hyperplane $x_1 = ct$, with $u_t$ and $Du$ vanishing there, we can proceed as in the two previous propositions
to get the following one.
\begin{prop}\label{P:travel-wave-sol}
For any unitary vector $\nu\in\Rd$ and $c\in\R$, the function
\[
    T(x,t;\nu,c) = \frac{d_h}{|c|} \left[ c^2 t - cx\cdot\nu \right]_+^{\frac{h}{h-1}}
\]
is a viscosity solution of \eqref{E:main} in $\Rd\times\R$.
\end{prop}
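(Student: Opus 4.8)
The plan is to verify that $T(x,t;\nu,c)$ is a viscosity solution by the same scheme used in Proposition~\ref{P:barenblatt-is-visc}: identify the singular set, check the equation classically off that set, and then handle the touching conditions at the singular points directly. By the rotational symmetry in \eqref{E:sym} it suffices to take $\nu = e_1$, so that $T(x,t) = \frac{d_h}{|c|}[c^2 t - c x_1]_+^{\frac{h}{h-1}}$. Writing $\eta = c^2 t - c x_1$, in the region $\{\eta > 0\}$ the function is a smooth function of the single variable $\eta$, and the one-dimensional computation $|f'|^{h-3}f'f'' = -c$ (with the integration constants set to zero) shows directly that $T_t + F_h(D^2 T, DT) = 0$ there. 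Since $\frac{h}{h-1} > 1$ for $h > 1$, the function $[\cdot]_+^{h/(h-1)}$ is $C^1$ across the hyperplane $\{\eta = 0\}$ with vanishing first derivatives, so $T$ is $C^1$ on all of $\Rd\times\R$, and $T_t = DT = 0$ on $\Sigma \bydef \{c^2 t - c x_1 = 0\}$ (the case $c = 0$ is the trivial stationary solution $T\equiv 0$ and is excluded from consideration).

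The remaining work is to check \eqref{E:is-visc-sol} at points $P_0 \in \Sigma$. Suppose $\phi \in C^{2,1}$ touches $T$ from above at $P_0$. Because $T$ is $C^1$, necessarily $\phi_t(P_0) = T_t(P_0) = 0$ and $D\phi(P_0) = DT(P_0) = 0$; hence $\phi_t(P_0) + F_h(D^2\phi(P_0), D\phi(P_0)) = 0$, which is the required subsolution inequality. For the supersolution half, one must rule out touching $T$ from below at a point of $\Sigma$: the second $\eta$-derivative of $[\eta]_+^{h/(h-1)}$ blows up to $+\infty$ as $\eta \downarrow 0$ precisely when $\frac{h}{h-1} < 2$, i.e. $h > 2$, so in that range the argument of Proposition~\ref{P:barenblatt-is-visc} applies verbatim — along the line $x = x_0 + \lambda\nu$ (or rather the direction in which $\eta$ increases) the difference quotient $\big(T(x_0+\lambda\nu,t_0) - T(P_0)\big)/\lambda^2 \to +\infty$, contradicting the $C^2$ lower bound from $\phi$. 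When $1 < h \le 2$ one instead has $\frac{h}{h-1} \ge 2$, so $T$ is in fact $C^2$ (indeed $C^\infty$ away from $\Sigma$ and at worst $C^2$ on $\Sigma$ when $h = 2$), and $T$ can be touched from below; but then at such a contact point $D^2\phi(P_0)$ still has $D\phi(P_0) = 0$, so $F_h(D^2\phi(P_0), D\phi(P_0)) = 0 = \phi_t(P_0)$ and the supersolution inequality holds with equality.

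I do not expect any serious obstacle: the proposition is essentially a corollary of the method already developed, and the only subtlety — the dichotomy in the exponent $\frac{h}{h-1}$ relative to $2$ — is exactly the one flagged in the remark after Proposition~\ref{P:barenblatt-is-visc}. The one point deserving a line of care is the bookkeeping of signs: depending on the sign of $c$, the positivity set $\{\eta > 0\}$ lies on one side or the other of $\Sigma$, and the gradient $DT$ points accordingly; but since everything is expressed through $[\,\cdot\,]_+$ and $|c|$, the formula is sign-robust and the verification is identical in both cases. Thus the proof reduces to: (i) reduce to $\nu = e_1$ by symmetry; (ii) confirm the classical identity off $\Sigma$ via the one-variable ODE; (iii) note $C^1$-regularity and vanishing of $T_t, DT$ on $\Sigma$; (iv) conclude the sub- and supersolution inequalities at points of $\Sigma$ exactly as in Proposition~\ref{P:barenblatt-is-visc}, using the no-touching-from-below claim when $h > 2$ and direct $C^2$ smoothness when $1 < h \le 2$.
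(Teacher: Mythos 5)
Your overall scheme --- reduce to $\nu=e_1$ by \eqref{E:sym}, verify the equation classically on $\{c^2t-cx_1>0\}$ via the one-variable ODE, note that $T$ is $C^1$ across $\Sigma=\{c^2t-cx_1=0\}$ with $T_t=DT=0$ there, and then check \eqref{E:is-visc-sol} at touching points of $\Sigma$ --- is exactly the paper's intended argument, and your steps (i)--(iii) and the subsolution check are fine. The flaw is in your supersolution branch for $h>2$: the claim that $T$ cannot be touched from below on $\Sigma$ is false, and the argument offered for it yields no contradiction. In Proposition~\ref{P:barenblatt-is-visc} the no-touching claim is needed at the tip $(0,t_0)$, where $DB_h=0$ but $\del_t B_h<0$ (so the inequality would genuinely fail if a touching existed), and the contradiction comes from the relevant difference quotient tending to $-\infty$, which is incompatible with the finite lower bound $\tfrac{\lambda^2}{2}(D^2\phi(P_0)e)\cdot e+o(\lambda^2)$. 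For $T$ at a point of $\Sigma$ the quotient $\bigl(T(x_0+\lambda\nu,t_0)-T(P_0)\bigr)/\lambda^2\to+\infty$, and touching from below only gives $T-T(P_0)\ge\phi-\phi(P_0)$, i.e.\ an upper bound on the expansion of $\phi$; ``$+\infty\ge$ finite'' contradicts nothing. Indeed $T$ can be touched from below at every point $P_0=(x_0,t_0)\in\Sigma$ and for every $h>1$, for instance by $\phi(x,t)=-|x-x_0|^2-(t-t_0)^2$, since $T\ge 0$ everywhere with equality at $P_0$.

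The repair is immediate and uniform in $h$, and it is what the paper does: since $T$ is $C^1$ with $T_t(P_0)=0$ and $DT(P_0)=0$ at every $P_0\in\Sigma$, any $\phi\in C^{2,1}$ touching $T$ from below (or above) at $P_0$ must satisfy $\phi_t(P_0)=0$ and $D\phi(P_0)=0$, hence $\phi_t(P_0)+F_h(D^2\phi(P_0),D\phi(P_0))=0$ and both halves of \eqref{E:is-visc-sol} hold with equality. This is precisely your $1<h\le 2$ branch, which works verbatim for all $h>1$; the dichotomy in the exponent $\tfrac{h}{h-1}$ is not needed because, unlike the Barenblatt solution, $T$ has no point where the gradient vanishes while the time derivative does not. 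Two minor inaccuracies in the same branch: at $h=2$ the profile $[\eta]_+^{2}$ is $C^{1,1}$ but not $C^2$ across $\Sigma$ (its second derivative jumps), so the ``$T$ is in fact $C^2$'' justification is not correct as stated, though immaterial since only the $C^1$ information is used; and one must of course assume $c\ne 0$, as the formula divides by $|c|$ (the statement's ``any $c\in\R$'' should be read accordingly).
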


\section{Existence}
The main purpose of this section is to prove a solution to \eqref{E:main}--\eqref{E:main-bc} exists. However, as we
mention in the introduction, and without much additional cost, it will useful to allow a zeroth-order term as in
\eqref{E:gen-main}. With $\eps \geq 0$ and $\delta > 0$, we start with the approximation
\begin{equation}\label{E:app-e-d}\begin{cases}
    u_t = \mathcal{L}^{\eps,\delta}(u) \bydef A^{\eps,\delta}(Du) : D^2 u + H(u) &\quad \text{in $Q$} \\
    u(P) = g^{\eps,\delta}(P), &\quad \text{on $\del_p Q$}.
\end{cases}\end{equation}
where the $d\times d$ matrix $A^{\eps,\delta}(p)$ is given by
\[
    A^{\eps,\delta}(p) = \eps I + (|p|^2 + \delta^2)^{(h-3)/2} p \otimes p.
\]
The proofs of the estimates for the approximate problem \eqref{E:app-e-d} are very similar to the corresponding
proofs of existence in \cite{PV} or \cite{JK}. For clarity we will present them schematically (see also
\cite{AS}, where a slightly different approximation is used).

\subsection{Lipschitz estimate in time}
We start with Lipschitz regularity in $t$ and then prove the regularity in $x$.

\begin{theorem}\label{T:lip-t0}
Suppose $g\in C^2({\del_p Q})$ and $u = u^{\eps,\delta}$ is a smooth solution of \eqref{E:app-e-d}.
Then there exists $K_1>0$ depending only on $\|D^2g\|_\infty$, $\|Dg\|_\infty$, $\|g\|_\infty$ and
$\|g_t\|_{\infty}$ such that
\[
    |u(x,t) - g(x,0)| \leq K_1 t
\]
for any $(x,t)\in Q$. If $g$ is only continuous in $x$ and bounded in $t$ then the modulus of continuity of $u$
on $\Omega\times[0,t_*]$ (for small $t_*$) can be estimated in terms of $\|g\|_\infty$ and the modulus of
continuity of $g_0\bydef g\big|_{\del_p Q}$ in $x$.
\end{theorem}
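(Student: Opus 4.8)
The plan is to construct explicit barriers from above and below using the special solutions built in Section~\ref{S:special-sol}, together with the comparison principle (Theorem~\ref{T:comparison}), suitably modified to account for the zeroth-order term $H(u)$. The key observation is that the traveling-wave and blow-up-type solutions from Propositions~\ref{P:blow-up-sol} and \ref{P:travel-wave-sol} are stationary at their free boundary, with both $u_t$ and $Du$ vanishing there, so they can be anchored at an arbitrary boundary point $(x_0, 0) \in \del_b Q$ and used to trap $u$ near that point. More precisely, given the bounds on the data, I would first find a smooth spatial function $\psi(x)$ with $\psi(x_0) = g(x_0,0)$, $\psi \geq g$ on a neighborhood of $x_0$ in $\del_b Q$, and with $|D\psi|$ bounded below away from zero, so that $\mathcal{L}^{\eps,\delta}(\psi)$ is controlled; then $w(x,t) = \psi(x) + K_1 t$ will be a supersolution of \eqref{E:app-e-d} for $K_1$ large enough (absorbing the elliptic term, the time-derivative bound from $\|g_t\|_\infty$ on the lateral boundary, and the linear growth $|H(u)| \le M|u|$ via a Gronwall-type adjustment, e.g. replacing $K_1 t$ by $\frac{K_1}{M}(e^{Mt}-1)$ or simply enlarging $K_1$ on a short time interval $[0,t_*]$). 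The symmetric construction from below gives $-K_1 t$, and combining the two at every $x_0$ yields $|u(x,t) - g(x,0)| \le K_1 t$.

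The concrete steps, in order: (1) reduce to the case $H \equiv 0$ up to an exponential weight, or equivalently restrict to $[0,t_*]$ with $t_*$ small so that the $H$-contribution is a bounded perturbation; (2) fix $(x_0,0)$ and build the upper barrier $w^+(x,t) = g(x_0,0) + C|x - x_0|^2 + K_1 t$ (a quadratic in $x$ suffices because $D^2$ of such a function is bounded and $A^{\eps,\delta}$ is uniformly bounded on bounded gradient sets; at $x = x_0$ the gradient vanishes but so does the whole operator term, matching $w^+_t = K_1 > 0$), choosing $C$ from $\|D^2 g\|_\infty, \|Dg\|_\infty$ so that $w^+ \geq g$ on $\del_p Q$ near $x_0$ and globally by also using $\|g\|_\infty$; (3) verify $w^+$ is a supersolution of \eqref{E:app-e-d}, which is a direct computation bounding $A^{\eps,\delta}(Dw^+):D^2 w^+ + H(w^+) \le w^+_t$; (4) apply Theorem~\ref{T:comparison} (its version for the regularized, non-degenerate problem) on $Q$ to conclude $u \le w^+$, evaluate at $x = x_0$ to get $u(x_0,t) - g(x_0,0) \le K_1 t$; (5) repeat with $w^-$ for the lower bound; (6) for the second assertion, when $g$ is merely continuous in $x$, replace the quadratic $C|x-x_0|^2$ by $\omega_g(|x-x_0|) + \eta$ built from the modulus of continuity $\omega_g$ of $g_0$, using that one can always dominate a modulus of continuity by a smooth concave function, and deduce the stated modulus-of-continuity estimate on $\Omega \times [0,t_*]$.

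The main obstacle is step~(3)–(4) in the degenerate-looking region: the barrier's gradient vanishes at $x_0$, so one must check that the operator $\mathcal{L}^{\eps,\delta}$ applied to $w^\pm$ does not blow up there — this is fine because $A^{\eps,\delta}(p)$ is continuous (indeed smooth) in $p$ including at $p=0$ thanks to the $\delta$-regularization, but it means the bound on $K_1$ genuinely uses $\delta$-independence only through the uniform bound $|A^{\eps,\delta}(p):M| \le (\eps + (|p|^2+\delta^2)^{(h-3)/2}|p|^2)|M|$, and for $h < 3$ one must note $(|p|^2+\delta^2)^{(h-3)/2}|p|^2 \le |p|^{h-1}$ is bounded on bounded sets uniformly in $\delta$ — so the constant $K_1$ is indeed independent of $\eps, \delta$, which is what makes the estimate pass to the limit later. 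A secondary, purely bookkeeping, difficulty is handling unbounded $\Omega$: there the far-field decay hypothesis on $g$ in Theorem~\ref{T:exist} guarantees the comparison principle still applies (or one localizes and lets the domain exhaust $\Omega$), but for the present theorem, since we only claim a local-in-time modulus estimate, it suffices to use the same barriers, which are global.
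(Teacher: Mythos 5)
There is a genuine gap at the heart of your steps (2)--(4). The barrier $w^+(x,t)=g(x_0,0)+C|x-x_0|^2+K_1t$ cannot be placed above the data on the bottom of the parabolic boundary near the anchor point: at $t=0$ one has $g(x,0)-g(x_0,0)=Dg(x_0,0)\cdot(x-x_0)+O(|x-x_0|^2)$, which is of \emph{linear} size in $|x-x_0|$ whenever $Dg(x_0,0)\neq 0$, and no choice of $C$ makes $C|x-x_0|^2$ dominate it for $x$ close to $x_0$. You also cannot repair this by adding a constant, because any additive constant survives the comparison and destroys the conclusion $|u(x_0,t)-g(x_0,0)|\leq K_1t$. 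So the hypothesis $u\leq w^+$ on $\del_p Q$ needed for step (4) simply fails. The missing ingredient is the first-order term: either use the second-order Taylor polynomial of $g(\cdot,0)$ at $x_0$ (the gradient term is harmless for the supersolution check, since $\mathcal{L}^{\eps,\delta}$ applied to a quadratic polynomial is bounded on bounded gradient sets, uniformly in small $\eps,\delta$, also for $1<h<3$), or, simpler and what the paper does, take as barriers $v^{\pm}(x,t)=g(x,0)\pm\lambda t$ with the \emph{full} initial datum as spatial profile: since $g\in C^2$, $|\mathcal{L}^{\eps,\delta}(g(\cdot,0))|$ is controlled by $\|Dg\|_\infty$ and $\|D^2g\|_\infty$, so $\lambda$ large makes $v^{\pm}$ super/subsolutions; they trivially match $u$ at $t=0$ and dominate on the lateral boundary once $\lambda\geq\|g_t\|_\infty$, and one comparison gives the estimate at every $x$ simultaneously, with no pointwise anchoring. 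Your treatment of the zeroth-order term (restrict to $T_*<(2M)^{-1}$, or an exponential weight, then iterate) is fine and matches the paper, which takes $\lambda>2M\|g\|_\infty$ and iterates in time; the special solutions of Section~\ref{S:special-sol} invoked in your opening paragraph play no role here and are not needed.

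For the second assertion (data only continuous in $x$), your plan of using a smoothed concave majorant of the modulus as a direct barrier meets a further obstruction: for a radial profile $\omega(|x-x_0|)$ with $\omega'(0)>0$ the term $\eps(d-1)\,\omega'(r)/r$ coming from $\eps\Delta$ blows up as $r\to 0$, so verifying the supersolution property near the anchor is delicate. The paper avoids this by squeezing $g$ between the smooth data $g^{\pm}(x,t)=g_0(x_0)\pm\omega_0(\rho)\pm 2\|g\|_\infty|x-x_0|^2/\rho^2$, comparing $u$ with the corresponding solutions $u^{\pm}$ of \eqref{E:app-e-d}, and applying the already-proved $C^2$ estimate to $u^{\pm}$; note that here the additive constant $\pm\omega_0(\rho)$ is exactly what makes the boundary domination work near $x_0$ --- a device unavailable in the $C^2$ part, which is why that part needs the gradient term (or the paper's choice of barrier) rather than a pure quadratic.
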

\begin{proof}
Assume for the moment $g \in C^2$ and $H\equiv 0$. Let $\lambda > 0$ and $v^{\pm}(x,t) = g_0(x) \pm \lambda t$. Then,
if we choose $\lambda$ large enough, $v^{+}$, respectively $v^{-}$, becomes a super-, respectively subsolution of
\eqref{E:app-e-d} which lies above, respectively below $u$ on $\del_p Q$. Therefore, by the classical comparison (see
for example \cite{LSU}),
\[
    |u(x,t) - g_0(x)| \leq K_0 t
\]
for $t\in[0,T]$ where $K_0$ is a constant depending only on the stated norms of $g$.

If $H\not\equiv 0$, then $v^{\pm}$ is still a super/subsolution of \eqref{E:app-e-d} in $\Omega\times(0,T_*)$, above/below
$g$ on $\del_p Q$, if we further choose $\lambda$ to satisfy $\lambda > 2M\|g\|_{\infty}$ and $T_* < (2M)^{-1}$.
Therefore the estimate is valid on this time interval. Then we can iterate the argument until we cover the whole interval
$[0,T]$.

Assume now $g$ is only continuous in $x$ and bounded in $t$ and let $\omega_0$ be the modulus of continuity of $g_0$.
Let us fix a point $x_0 \in \Omega$ and $0 < \rho < \min(\dist(x_0,\del \Omega),2\sqrt{\|g\|_\infty})$. Let us also
define
\[
    g^{\pm}(x,t) = g_0(x_0) \pm \omega_0(\rho) \pm \frac{2\|g\|_\infty}{\rho^2}|x-x_0|^2.
\]
It is easy to see that $g^- \leq g \leq g^+$ on $\Gamma$ and thus, again from the comparison principle,
$u^- \leq f \leq u^+$, where $u^\pm$ is the solution of \eqref{E:app-e-d} with initial and boundary condition
$g^\pm$. Since $g^\pm$ are in $C^2(\Rd\times\R)$, we can use the above estimate to conclude that
\[
    |u^\pm(x_0,t) - g_0^\pm(x_0)| \leq K^{\pm}_0 t
\]
where $K^{\pm}_0$ depends on $\|g\|_\infty$ and $\rho$. Therefore,
\[\begin{aligned}
    |u&(x_0,t) - g_0(x_0)| 
     \leq 2 K^+_0 t + \frac{3}{2} \omega_0(\rho).
\end{aligned}\]
This inequality concludes the proof.
\end{proof}

Using \eqref{T:lip-t0} we obtain the full Lipschitz estimate in time.

\begin{theorem}\label{T:lip-t}
If $u$ is a solution of \eqref{E:app-e-d} in $Q$ and $g \in C^2(\overline{Q})$, then there exists $K_2 > 0$
depending only on $\|D^2g\|_\infty$, $\|Dg\|_\infty$, $\|g\|_\infty$ and $\|g_t\|_\infty$ such that
\[
    |u(x,t) - u(x,s)| \leq K_2 |t - s|
\]
for every $x\in \Omega$, $t,s\in (0,T)$. If $g$ is merely continuous, we can estimate the modulus of continuity of
$u$ on $Q$ in terms of $\|g_0\|_\infty$ and the modulus of continuity of $g_0$.
\end{theorem}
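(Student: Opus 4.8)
The plan is to bootstrap from the ``one-sided in time'' estimate of Theorem \ref{T:lip-t0} by exploiting the autonomy of equation \eqref{E:app-e-d} in $t$ together with the comparison principle (classical comparison for smooth solutions, as used already in the proof of Theorem \ref{T:lip-t0}). The key observation is that if $u = u^{\eps,\delta}$ solves \eqref{E:app-e-d} on $Q = \Omega\times(0,T)$, then for any fixed $\tau \in (0,T)$ the shifted function $u_\tau(x,t) \bydef u(x,t+\tau)$ solves the same equation on $\Omega\times(0,T-\tau)$, since $A^{\eps,\delta}$ and $H$ do not depend on $t$. Therefore both $u$ and $u_\tau$ are solutions of \eqref{E:app-e-d} on the common cylinder $Q_\tau \bydef \Omega\times(0,T-\tau)$, and by the comparison statement (applied to the difference, or directly to $u$ and $u_\tau$ as in Theorem \ref{T:comparison}) we get
\[
    \sup_{Q_\tau} |u(x,t+\tau) - u(x,t)| \;\leq\; \sup_{\del_p Q_\tau} |u(x,t+\tau) - u(x,t)|.
\]

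First I would control the right-hand side. On the lateral part $\del\Omega\times(0,T-\tau)$ we have $u(x,t+\tau) - u(x,t) = g(x,t+\tau) - g(x,t)$, which is bounded by $\|g_t\|_\infty \,\tau$ (using $g\in C^2(\overline Q)$, or merely the modulus of continuity of $g$ in $t$ in the continuous case). On the base $\overline\Omega\times\{0\}$ we have $u(x,\tau) - g(x,0)$, and here Theorem \ref{T:lip-t0} gives exactly $|u(x,\tau) - g(x,0)| \leq K_1\tau$ with $K_1$ depending only on $\|D^2 g\|_\infty$, $\|Dg\|_\infty$, $\|g\|_\infty$, $\|g_t\|_\infty$ (in the continuous case, in terms of $\|g_0\|_\infty$ and the modulus of continuity of $g_0$, valid on a short initial interval and then iterated). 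Combining, $\sup_{\del_p Q_\tau}|u(x,t+\tau)-u(x,t)| \leq K_2\,\tau$ with $K_2 = \max(K_1, \|g_t\|_\infty)$ (adjusting by a harmless factor if one prefers). This yields $|u(x,t+\tau) - u(x,t)| \leq K_2 \tau$ for all $x\in\Omega$ and all $t, t+\tau \in (0,T)$; writing $\tau = |t-s|$ and relabelling gives the claimed Lipschitz-in-time bound. For the merely-continuous case the same scheme produces a modulus of continuity in $t$ for $u$ in terms of $\|g_0\|_\infty$ and the modulus of continuity of $g_0$, simply by replacing the linear bounds $K_1\tau$ and $\|g_t\|_\infty\tau$ by the corresponding moduli coming from Theorem \ref{T:lip-t0} and from $g$.

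The main (and essentially only) obstacle is a bookkeeping one: making sure the comparison principle genuinely applies on the truncated cylinder $Q_\tau$, i.e.\ that $u$ and $u_\tau$ are admissible sub/supersolutions there with ordered parabolic-boundary data after we have inserted the one-sided estimate on the base. Since $u$ is assumed to be a smooth solution of the uniformly parabolic (for $\eps>0$) problem \eqref{E:app-e-d}, the classical comparison of \cite{LSU} applies without fuss, exactly as in the proof of Theorem \ref{T:lip-t0}; the continuous-$g$ version then follows by the same approximation/iteration device used there. No new estimate is needed — the content is the translation-invariance in time plus comparison.
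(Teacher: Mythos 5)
Your argument—time-translating $u$ by $\tau$, noting the shifted function solves the same autonomous equation, bounding the difference on the parabolic boundary of the truncated cylinder via Theorem~\ref{T:lip-t0} (base) and $\|g_t\|_\infty\tau$ (lateral side), and concluding by comparison—is exactly the paper's proof, which it states in two lines. Your version is correct and just spells out the bookkeeping the paper leaves implicit (including the short-time iteration needed to handle the zero-order term $H$, as in Theorem~\ref{T:lip-t0}).
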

\begin{proof}
Taking $\tau >0$ and
\[
    \hat{u}(x,t) \bydef u(x, t+\tau),
\]
using the Theorem~\ref{T:lip-t0} it is immediate to get
\[
    |u(x,t) - \hat{u}(x,t)| \leq K_2 t
\]
in $\Omega\times [0,T-\tau]$. The case when $g$ is only continuous is done as in the previous proof.
\end{proof}

\subsection{H\"older continuity in space}

\begin{theorem}\label{T:holder-bdry}
Let $u$ be the solution of \eqref{E:app-e-d} with $g\in C^2(Q)\cap \mathrm{Lip}(\overline{Q})$.
There exist $\alpha\in(0,1)$ and $K_3 > 1$, depending only on, $\|g\|_\infty$,
$\|Dg\|_\infty$, $\|g_t\|_\infty$ and $\alpha$, such that for every $\eps$ and $\delta$ sufficiently small and
for every $P_0 = (x_0,t_0) \in \Gamma$ and $x\in \Omega$ with $|x-x_0| \leq 1$ we have
\[
    |u(x,t_0) - g(x_0,t_0)| \leq K_3 |x-x_0|^\alpha.
\]
\end{theorem}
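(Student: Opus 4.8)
The plan is to establish boundary Hölder continuity by the classical barrier method, constructing explicit super- and subsolutions of the approximate equation \eqref{E:app-e-d} that pin down $u$ at a boundary point $P_0 = (x_0,t_0)$. Because the operator $\mathcal{L}^{\eps,\delta}$ is uniformly elliptic for fixed $\eps>0$ (with ellipticity constant $\eps$) but the bounds must be uniform in $\eps,\delta\downarrow 0$, the barriers have to be tailored to the degenerate operator $\lapih$ rather than to the linearized one; the $\eps I$ and $\delta$-regularization terms should only help. A natural candidate is a radial barrier of the form $w^\pm(x,t) = g(x_0,t_0) \pm \bigl( C|x-x_0|^\alpha + K(t-t_0) + L|t-t_0| \bigr)$, or more robustly, taking advantage of the special solutions already in hand: the traveling-wave and separated-variable solutions from Section~\ref{S:special-sol} are $C^1$ with vanishing gradient on a hyperplane/sphere, and power-type comparison functions $|x-x_0|^{\gamma}$ with a carefully chosen exponent $\gamma$ related to $\frac{h}{h-1}$ are natural given the homogeneity.

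The key steps, in order, would be: (1) reduce to the model situation by translating so that $P_0$ is the origin and, since $g$ is Lipschitz on $\overline{Q}$, replacing $g$ near $P_0$ by its Lipschitz bound $g(x_0,t_0) - \|Dg\|_\infty |x-x_0| - \|g_t\|_\infty |t|$ from below and symmetrically from above, so it suffices to compare against affine-in-$g$ data; (2) since $u$ is already Lipschitz in time by Theorem~\ref{T:lip-t} with constant controlled by the stated norms, it is enough to control $u(x,t_0) - g(x_0,t_0)$ spatially at the fixed time $t_0$, so the barrier really only needs to dominate the equation uniformly in a spatial neighborhood while absorbing a controlled time error; (3) build $\psi(x) = A\bigl(1 - |x-x_0|^\alpha\bigr)$ type barriers (or, near a flat portion of $\partial\Omega$, half-space barriers using $\dist(x,\partial\Omega)^\alpha$), compute $\mathcal{L}^{\eps,\delta}\psi$ and check that for $\alpha$ small enough (depending only on $h$) and $A$ large enough (depending on $\|g\|_\infty,\|Dg\|_\infty,\|g_t\|_\infty$) one has $\psi_t - \mathcal{L}^{\eps,\delta}\psi \le 0 \le $ its negative, with all constants independent of $\eps,\delta$; (4) verify the ordering on $\del_p Q$ of the shifted cylinder $B_1(x_0)\times(t_0-\tau,t_0)$ using the $L^\infty$ and Lipschitz bounds on $g$ and the global bound $\|u\|_\infty \le \|g\|_\infty e^{MT}$ (from the comparison with $\pm\|g\|_\infty e^{Mt}$); (5) apply the comparison principle for \eqref{E:app-e-d} (classical comparison, as in \cite{LSU}, valid since $\eps>0$) to sandwich $u$ and read off $|u(x,t_0)-g(x_0,t_0)| \le K_3|x-x_0|^\alpha$.

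The main obstacle is step (3): making the barrier computation work with constants that do not blow up as $\eps,\delta\to 0$ or as $x\to x_0$. The operator $\mathcal{L}^{\eps,\delta}\psi = \eps\lap\psi + (|D\psi|^2+\delta^2)^{(h-3)/2}\langle D^2\psi\, D\psi, D\psi\rangle$ applied to a radial power $|x-x_0|^\alpha$ produces terms scaling like $|x-x_0|^{\alpha-2}$ from the $\eps$-part and like $|x-x_0|^{(h-1)\alpha - h}$ from the $\lapih$-part (when $|D\psi| \gg \delta$), and one needs the negative (concave) radial second-derivative contribution to dominate the positive tangential one; this forces $\alpha < 1$ and, more delicately, the exponent balance requires checking that $(h-1)\alpha - h < \alpha - 2$, i.e. $\alpha < 1$, which is consistent, but near $x_0$ the $\delta$-regularized coefficient $(|D\psi|^2+\delta^2)^{(h-3)/2}$ behaves badly for $h<3$ when $|D\psi|$ is small — however $|D\psi| \sim |x-x_0|^{\alpha-1} \to \infty$ as $x\to x_0$, so this is actually where the barrier is strongest, and the genuinely delicate region is the outer boundary $|x-x_0| = 1$, handled by choosing $A$ large. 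The second subtlety is that $\Omega$ is merely open with no regularity assumed, so for interior-in-$\Omega$ points the one-point touching of $\Gamma$ only at $t=t_0$ (the base) must be used, while near $\partial\Omega$ one uses that the barrier, being a radial function centered at $x_0 \in \partial\Omega$, still lies above/below $g$ on the lateral boundary by the Lipschitz bound — no exterior sphere condition is needed precisely because the barrier is built from the equation's own homogeneity rather than from a distance function requiring boundary regularity.
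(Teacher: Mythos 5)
Your proposal follows essentially the same route as the paper: a radial barrier $g(P_0)\pm\bigl(K|x-x_0|^\alpha+\lambda|t-t_0|\bigr)$ centered at the boundary point, verified to be a super/subsolution of \eqref{E:app-e-d} uniformly in $\eps,\delta$ (using $(z^2+\delta^2)^{(h-3)/2}z^2\geq z^{h-1}$ for $z\geq 1$), ordered on the parabolic boundary of $(\Omega\cap B_1(x_0))\times(t_0-\tau,t_0)$ via the Lipschitz and sup bounds on $g$, and closed by comparison, with no boundary regularity of $\Omega$ needed. The only slip is in your heuristic exponent bookkeeping in step (3) (the degenerate term scales like $|x-x_0|^{h\alpha-h-1}$, not $|x-x_0|^{(h-1)\alpha-h}$, and your stated inequality is equivalent to $\alpha<1$ only for $h>2$); this is immaterial, since both the good degenerate term and the bad $\eps$-term carry the same factor $K\alpha|x-x_0|^{\alpha-2}$ and the degenerate coefficient is bounded below by $(\alpha K)^{h-1}$, exactly as in the paper's computation.
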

\begin{proof}
Let us define
\[
    v^+(x,t) = g(x_0,t_0) + K_*|x-x_0|^\alpha + \lambda\left( t_0-t \right),
\]
where $K^* \geq 1$ and $\lambda >0$ are constants which we will choose in such a way as to make $v^+$
a supersolution lying above $u$ on the appropriate domain. Let us take $x \in \Omega\cap B_{1}(x_0)$
and $t \in (0,t_0)$. An easy computation, using the fact that $|x-x_0| \leq 1$, yields
\[
    v^+_t - \mathcal{L}^{\eps,\delta} v^+ 
    \geq -\lambda + \frac{K_*\alpha(1-\alpha)}{|x-x_0|^{2-\alpha}} \left( f\left(
        \frac{\alpha K_*}{|x-x_0|^{1-\alpha}} \right) - \eps\frac{d-1}{1-\alpha} \right)
        - M( \|g\|_{\infty} + K_* + T\lambda),
\]
where
\[
    f(z) \bydef z^2 \left( \delta^2 + z^2 \right)^{\frac{h-3}{2}}.
\]
Since $|x-x_0| \leq 1$, we need only consider $z \geq \alpha K_*$. If $h \geq 3$, then $f(z) \geq z^{h-1}$ and when
$1 < h < 3$ we have $f(z) \geq 2^{(h-3)/2} z^{h-1} \geq z^{h-1}$ provided $z > \delta$. In any case it holds
\[\begin{aligned}
    v^+_t - \mathcal{L}^{\eps,\delta} v^+ &\geq -\lambda + \frac{K_*\alpha(1-\alpha)}{|x-x_0|^{2-\alpha}}
        \left( \alpha^{h-1} K_*^{h-1} - \eps \frac{d-1}{1-\alpha} \right) - M(\|g\|_\infty +K_* + T\lambda) > 0
\end{aligned}\]
whenever $\eps,\delta < 1 < \alpha K_*$,
\[
    \alpha K_* > \left( \frac{\eps(d-1)}{1-\alpha} \right)^{\oo{h-1}}
        \quad \text{and} \quad
    M\|g\|_{\infty} + (1+MT)\lambda < (1-\alpha) \alpha^h K_*^h - (\eps(d-1) + M)\alpha K_*.
\]

We want to have $v^+ > u$ on $Q^* = (\Omega\cap B_{1}(x_0)) \times (t_0 - t_*,t_0)$, where
we take $t_* \bydef \min\{1,t_0\}$. Let $P = (x,t) \in \del_p Q^*$. Let us first assume $P$ is on
the lateral boundary of $Q^*$. Since $u=g$ on $\del \Omega$ and $|x-x_0| <1$,
\[\begin{aligned}
    u(P) &\leq u(P_0) + \|Dg\|_{\infty} |x-x_0| + \|g_t\|_{\infty} (t_0 -t) \\
    & \leq g(P_0) + K_*|x-x_0|^\alpha + \lambda(t_0 - t) = v^+(P),
\end{aligned}\]
provided $K_* \geq \|Dg\|_{\infty}$ and $\lambda \geq \|g_t\|_{\infty}$.
If, on the other hand, $x\in \Omega\cap\del B_{1}(x_0)$, then, using comparison,
\[
    u(P) \leq \|g\|_{\infty} \leq u(P_0) + K_* + \lambda(t_0 - t)
        \leq v^+(P),
\]
provided $K_* \geq \|g\|_\infty$.

We consider now the case when $P$ is on the bottom of the cylinder $Q^*$. Let us first assume
$x\in \Omega\cap B_{1}(x_0)$ and $t = t_0 -1$. In this
case, again using comparison, we get
\[
    u(P) \leq \|g\|_{\infty} \leq u(P_0) + K_*|x-x_0|^\alpha + \lambda = v^+(P),
\]
as long as $\lambda \geq \|g\|_{\infty}$. Finally, when $t_0 <1$, and hence
$Q^* = (\Omega\cap B_{1}) \times (0,t_0)$, we have that $u = g$ on the bottom, therefore
\[\begin{aligned}
    u(P) &= u(x,0) = g(x,0) \leq g(x_0,t_0) + \|Dg\|_{\infty} |x-x_0| + \|g_t\|_{\infty} t_0 \\
    &\leq g(P_0) + K_* |x-x_0|^\alpha +\lambda t_0 = v^+(P),
\end{aligned}\]
provided, once again, $K_* \geq \|Dg\|_{\infty}$ and $\lambda \geq \|g_t\|_{\infty}$.

To summarize, we have $v^+ \geq u$ on $\del_p Q^*$, and hence, by comparison, $v^+ \geq u$ in
$Q^*$, if $\eps,\delta < 1$,
\[\begin{aligned}
    &K_*\alpha \geq \max\left\{ 1,\, \|Dg\|_\infty, \, \|g\|_\infty,\, \left( \frac{\eps(d-1)}{1-\alpha}
        \right)^{\oo{h-1}} \right\}, \\
    &M\|g\|_{\infty} + (1+MT)\lambda < (1-\alpha) \alpha^h K_*^h - (\eps(d-1) + M)\alpha K_*
        \quad \text{and} \\
    &\lambda \geq \max\left\{\|g_t\|_\infty,\,\|g\|_\infty \right\}.
\end{aligned}\]
Since $h>1$ we can choose the constants $K_*$ and $\lambda$ to satisfy these inequalities. We obtain
\[
    u(x,t_0) - g(x_0,t_0) \leq v^+(x,t_0) - g(P_0) = K_*|x-x_0|^\alpha.
\]
Using the barrier $v^-\bydef g(P_0) - K_*|x-x_0|^\alpha + \lambda(t - t_0)$ we get the reverse
inequality,
\[
    u(x,t_0) - g(x_0,t_0) \geq - K_*|x-x_0|^\alpha.
\]
\end{proof}

We can extend the estimate to the interior of the domain. We will use the following notation for convenience.
For $z\in \Rd$, we define $\Omega_z = z + \Omega = \{x+z\mid x\in \Omega \}$ and for $r>0$, $\Omega_r = \{ x \in \Omega \mid \dist(x,
\del \Omega) \geq r \}$. Given $x,y\in\Rd$ we define the closed segment $[x,y] \bydef \{ \theta y + (1-\theta) x
\mid 0\leq \theta \leq 1 \}$. The semi-open and open segments $[x,y)$, $(x,y]$ and $(x,y)$ are defined
analogously.

\begin{theorem}\label{T:holder-x}
The conclusion of Theorem~\ref{T:holder-bdry} is valid in the interior of $Q$, that is, there exists
$K_4$, depending only on $\|g\|_{\infty}$, $\|Dg\|_{\infty}$ and $\|g_t\|_{\infty}$, such that for
$\eps$ and $\delta$ sufficiently small and for every $x,y\in \Omega$
\[
    |u(x,t) - u(y,t)| \leq K_4|x-y|^ \alpha.
\]
\end{theorem}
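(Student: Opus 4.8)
The plan is to bootstrap the boundary Hölder estimate of Theorem~\ref{T:holder-bdry} to an interior estimate by a translation-and-comparison argument, exploiting the fact that Equation~\eqref{E:app-e-d} is invariant under spatial translations (the matrix $A^{\eps,\delta}$ depends only on $Du$, and $H$ only on $u$). Fix $z\in\Rd$ and consider, alongside $u$ on $Q$, the translated function $\tilde u(x,t)\bydef u(x-z,t)$, which solves the same equation on $Q_z\times(0,T)$ with boundary data $\tilde g(x,t)=g(x-z,t)$ on $\del_p(Q_z\times(0,T))$. On the overlap $\Omega\cap\Omega_z$ both $u$ and $\tilde u$ solve \eqref{E:app-e-d}, so the comparison principle (Theorem~\ref{T:comparison}, or rather the classical smooth comparison used above) reduces the estimate on $|u-\tilde u|$ in the interior to an estimate on the parabolic boundary of $(\Omega\cap\Omega_z)\times(0,T)$. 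That boundary splits into the base $t=0$, where $|u(x,0)-\tilde u(x,0)|=|g(x,0)-g(x-z,0)|\le \|Dg\|_\infty|z|$, and the lateral part: a point $(x,t)$ there has either $x\in\del\Omega$ (so $u(x,t)=g(x,t)$ and $|u-\tilde u|=|g(x,t)-u(x-z,t)|$, controlled by Theorem~\ref{T:holder-bdry} applied at the boundary point $(x,t)$ with the interior point $x-z$, giving $\le K_3|z|^\alpha$) or $x-z\in\del\Omega$ (symmetric, using the translated boundary estimate for $\tilde u$, also $\le K_3|z|^\alpha$).

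Thus on $\del_p\bigl((\Omega\cap\Omega_z)\times(0,T)\bigr)$ we have $|u-\tilde u|\le \max\{\|Dg\|_\infty|z|,\,2K_3|z|^\alpha\}$, which for $|z|\le 1$ is bounded by $K_4|z|^\alpha$ with $K_4$ depending only on $\|g\|_\infty,\|Dg\|_\infty,\|g_t\|_\infty$ (note $\alpha$ is fixed in Theorem~\ref{T:holder-bdry}, and $|z|\le|z|^\alpha$ for $|z|\le1$). Applying comparison on the open cylinder $(\Omega\cap\Omega_z)\times(0,T)$ propagates this bound inward: $|u(x,t)-u(x-z,t)|\le K_4|z|^\alpha$ for all $x\in\Omega\cap\Omega_z$ and $t\in(0,T)$. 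Given arbitrary $x,y\in\Omega$ with $|x-y|\le 1$, take $z=x-y$; then $x\in\Omega$ and $x-z=y\in\Omega$, so $x\in\Omega\cap\Omega_z$ and the bound reads $|u(x,t)-u(y,t)|\le K_4|x-y|^\alpha$, which is the claim for nearby points. For $|x-y|>1$ the estimate is trivial after enlarging $K_4$ by using $\|u\|_\infty\le\|g\|_\infty$ (from comparison with constants, as already invoked in the proof of Theorem~\ref{T:holder-bdry}).

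The one genuinely delicate point is the lateral-boundary case $x-z\in\del\Omega$: there I need the Hölder bound for $\tilde u$ at a boundary point of $\Omega_z$, i.e. the statement of Theorem~\ref{T:holder-bdry} for the translated problem. But that theorem's constant $K_3$ depends only on the norms of $g$, which are translation-invariant, so the same $K_3$ works for $\tilde g=g(\cdot-z,\cdot)$ — there is no loss. A secondary subtlety is that Theorem~\ref{T:holder-bdry} is stated as a bound $|u(x,t_0)-g(x_0,t_0)|\le K_3|x-x_0|^\alpha$ relating an interior value at time $t_0$ to the boundary datum at the \emph{same} time; on the lateral boundary a point is $(x_0,t_0)$ with $x_0\in\del\Omega$ and this is exactly the form needed, so no time-regularity input beyond what is already in $K_3$ is required. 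One must also check that the comparison principle applies on the possibly irregular open set $\Omega\cap\Omega_z$; since we are working at the level of the smooth approximate problem \eqref{E:app-e-d}, classical parabolic comparison (as cited from \cite{LSU}) suffices and no regularity of $\del(\Omega\cap\Omega_z)$ is needed beyond openness, which is the main reason the argument is carried out at the $(\eps,\delta)$-level rather than directly for viscosity solutions.
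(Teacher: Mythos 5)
Your proof is correct and runs on the same engine as the paper's: translation invariance of \eqref{E:app-e-d}, the boundary estimate of Theorem~\ref{T:holder-bdry} applied at each time on the lateral part of $\del_p\bigl((\Omega\cap\Omega_z)\times(0,T)\bigr)$, the Lipschitz bound for $g$ on the base, and comparison on the overlap $\Omega\cap\Omega_z$. Where you genuinely depart from the paper is in how the translate estimate is converted into the interior estimate: you observe that to bound $|u(x,t)-u(y,t)|$ with $z=x-y$ it suffices that the \emph{single} point $x$ lie in $\Omega\cap\Omega_z$, which is automatic once $x,y\in\Omega$, so the estimate for all pairs with $|x-y|\leq 1$ (plus the trivial bound $2\|g\|_\infty$ for distant pairs) follows in one stroke. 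The paper instead records the translate estimate only when both points lie in the overlap (in particular when $x,y\in\Omega_{|x-y|}$), and therefore needs a second step: chaining through the midpoint $w=(x+y)/2$ when $[x,y]\subset\Omega$, and passing through boundary points $w_1,w_2\in\del\Omega\cap[x,y]$ (two applications of Theorem~\ref{T:holder-bdry} plus the Lipschitz continuity of $g$) when the segment leaves $\Omega$, ending with the larger constant $4^{1-\alpha}K_3$. Your shortcut eliminates that case analysis, needs no connectivity or segment considerations at all, and yields essentially $K_4=\max\{2K_3,\|Dg\|_\infty,2\|g\|_\infty\}$; the paper's chaining pattern is the more robust one when a translate estimate is only available away from the boundary, but in the present setting it is not needed. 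Two small shared caveats, affecting you and the paper equally: at a point of $\del(\Omega\cap\Omega_z)$ where both $x\in\del\Omega$ and $x-z\in\del\Omega$ one uses $|g(x,t)-g(x-z,t)|\leq\|Dg\|_\infty|z|$ rather than Theorem~\ref{T:holder-bdry}; and comparing $u$ with $u_z\pm K_3|z|^\alpha$ is not literally an application of the comparison principle when $H\not\equiv 0$, since a constant shift of a solution need not be a super/subsolution for a merely continuous $H$ --- as in the paper's own use of this step, one should either assume $H$ Lipschitz (absorbing the error by a Gronwall factor $e^{Mt}$) or note that the final statement is unaffected.
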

\begin{proof}
To simplify notation, in this proof we omit the dependence of $u$ in $t$, since time does not play any role.
Take a vector $z\in B_{1}(0)$ and let $V = \Omega \cap \Omega_z$. Let us define
$u_z(x) \bydef u(x-z)$. From Theorem~\ref{T:holder-bdry} we have that $|u(x)-u_z(x)| \leq K_3|z|^\alpha$ on
$\del V$ ($x\in \del V$ implies that $x\in \del \Omega$ or $x-z\in \del \Omega$). Hence, using the comparison
principle we have that $u_z(x) - K_3|z|^\alpha \leq u(x) \leq u_z(x) + K_3|z|^\alpha$ for $x\in V$. This means
that whenever $x,y\in \Omega_{x-y}\cap  \Omega$ or $x,y\in \Omega_{y-x} \cap \Omega$, with $|x-y| \leq \rho_3$,
we have $|u(x) - u(y)| \leq K_3 |x-y|^\alpha$. In particular, the same is true whenever $x,y\in \Omega_{|x-y|}$.

When $|x-y| > 1$, using the comparison principle we obtain the conclusion of the theorem taking
$K_4 = 2\|g\|_\infty$. Let us therefore assume that $|x-y| \leq 1$ and  $x-y\notin \Omega_{|x-y|}$.
Let us first further assume that $[x,y] \subset \Omega$. In this case we can take the two segments $[x,w]$ and
$[w,y]$, where $w = (x+y)/2$ is the midpoint of $[x,y]$, let $z = y-w$ and note that $w,y\in \Omega_z \cap \Omega$ and
$x,w\in \Omega_{-z}\cap \Omega$. Hence, from the first step of this proof, we have
\[\begin{aligned}
    |u(x) - u(y)| &\leq |u(x) - u(w)| + |u(w) - u(y)| \leq K_3(|x-w|^\alpha + |w-y|^\alpha) \\
    &\leq 2^{1-\alpha}K_3 |x-y|^\alpha.
\end{aligned}\]

If the segment $[x,y]$ is not completely in $\Omega$, then we can certainly find $w_1, w_2 \in \del \Omega \cap [x,y]$
(not necessarily different) such that $[x,w_1) \in \Omega$ and $(w_2,y] \in \Omega$. In this case we can apply
Theorem~\ref{T:holder-bdry} directly to get $|u(x) - u(w_1)| \leq K_3 |x-w_1|^\alpha$ and $|u(w_2) - u(y)| \leq
K_3 |w_2-y|^\alpha$. Since $|u(w_1) - u(w_2)| \leq \|Dg\||w_1 - w_2| \leq K_3|w_1 - w_2|^\alpha$, we easily get
the result with $K_4 = 4^{1-\alpha} K_3$. This finishes the proof.
\end{proof}

\subsection{Lipschitz estimate in space ($\eps=0$)}

As in \cite{PV}, we obtain Lipschitz regularity in space when we take $\eps = 0$ in the approximation \eqref{E:app-e-d}.

\begin{theorem}\label{T:lip-bdry}
Let $g \in \mathrm{Lip}(\overline{Q})$ and suppose $u$ is a viscosity solution of \eqref{E:app-e-d} with $\eps =0$.
There exist a constant $K_5$, depending only on $\|g\|_{\infty}$, $\|Dg\|_{\infty}$ and
$\|g_t\|_{\infty}$ (independent of $\delta\in(0,1)$), such that for every $P_0 = (x_0,t_0) \in \del
\Omega\times (0,T)$ and $x\in \Omega\cap B_{1}(x_0)$ we have
\[
    |u(x,t_0) - g(x_0,t_0)| \leq K_5 |x-x_0|.
\]
Furthermore, if $g$ is only continuous, then the modulus of continuity of $u$ can be estimated in terms of
$\|g\|_{\infty}$ and the modulus of continuity of $g$.
\end{theorem}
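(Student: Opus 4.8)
The plan is to mimic the proof of Theorem~\ref{T:holder-bdry}, but now exploiting the fact that with $\eps=0$ the matrix $A^{0,\delta}(p) = (|p|^2+\delta^2)^{(h-3)/2}\, p\otimes p$ is a rank-one degenerate matrix, so the operator $\mathcal{L}^{0,\delta}$ only sees the second derivative of a test function in the direction of its gradient. This lets us work with a barrier that is \emph{linear} in the spatial variable away from $x_0$ — more precisely a cone-type barrier — rather than the $|x-x_0|^\alpha$ power that was needed to beat the $\eps\lap$ term. First I would set, for $x\in\Omega\cap B_1(x_0)$ and $t\in(0,t_0)$,
\[
    v^+(x,t) = g(x_0,t_0) + K_5\,|x-x_0| + \lambda\,(t_0-t),
\]
with $K_5\geq 1$ and $\lambda>0$ to be chosen. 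The function $\psi(x)=|x-x_0|$ is smooth away from $x_0$, with $D\psi = (x-x_0)/|x-x_0|$ a unit vector and $D^2\psi = \tfrac{1}{|x-x_0|}\bigl(I - D\psi\otimes D\psi\bigr)$, which is \emph{orthogonal} to $D\psi$; hence $A^{0,\delta}(D v^+):D^2 v^+ = K_5^3(K_5^2|Dv^+|^{-2}\cdots)$ — concretely, writing $p = Dv^+ = K_5 D\psi$, one gets $A^{0,\delta}(p): D^2 v^+ = K_5\,(|p|^2+\delta^2)^{(h-3)/2}\,\langle D^2v^+\, p, p\rangle / \text{(stuff)}$, and since $\langle (I-D\psi\otimes D\psi) D\psi, D\psi\rangle = 0$ the contribution of the second-order part vanishes identically. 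Thus $\mathcal{L}^{0,\delta} v^+ = H(v^+)$ at every point of $B_1(x_0)\setminus\{x_0\}$, so
\[
    v^+_t - \mathcal{L}^{0,\delta} v^+ = -\lambda - H(v^+) \geq -\lambda - M(\|g\|_\infty + K_5 + T\lambda) > 0
\]
as soon as $\lambda$ (hence $K_5$) is chosen large enough relative to $M,T,\|g\|_\infty$; note that this choice does \emph{not} degrade as $\delta\downarrow 0$, which is the whole point.

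Next I would check the boundary comparison $v^+\geq u$ on $\del_p Q^*$, where $Q^* = (\Omega\cap B_1(x_0))\times(t_0-t_*,t_0)$ and $t_* = \min\{1,t_0\}$, exactly as in Theorem~\ref{T:holder-bdry}: on the lateral boundary $\del\Omega\cap B_1(x_0)$ we use $u=g$ and the Lipschitz bound $u(P)\leq g(P_0) + \|Dg\|_\infty|x-x_0| + \|g_t\|_\infty(t_0-t) \leq v^+(P)$ provided $K_5\geq \|Dg\|_\infty$ and $\lambda\geq\|g_t\|_\infty$; on $\Omega\cap\del B_1(x_0)$ we use the global bound $u\leq\|g\|_\infty$ together with $K_5\geq \|g\|_\infty$; and on the bottom face we argue as before, splitting into the cases $t_0\geq 1$ (using $u\leq\|g\|_\infty$ and $\lambda\geq\|g\|_\infty$) and $t_0<1$ (using $u=g$ on $\{t=0\}$ and the Lipschitz bound on $g$). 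Having verified $v^+\geq u$ on $\del_p Q^*$, the comparison principle for \eqref{E:app-e-d} gives $v^+\geq u$ in $Q^*$, and evaluating at $t=t_0$ yields $u(x,t_0) - g(x_0,t_0)\leq K_5|x-x_0|$. The symmetric barrier $v^-(x,t) = g(P_0) - K_5|x-x_0| + \lambda(t-t_0)$ gives the matching lower bound, so the stated Lipschitz estimate holds; the final sentence about merely continuous $g$ follows by the same approximation-by-paraboloids device used at the end of the proof of Theorem~\ref{T:lip-t0}, now replacing the quadratic correction by a term $\frac{C}{\rho}|x-x_0|$ and tracking the modulus of continuity $\omega_0$ of $g$.

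The main obstacle — and the reason $\eps$ must vanish — is precisely the verification in the first paragraph: when $\eps>0$ the extra $\eps\lap v^+ = \eps K_5 \,(d-1)/|x-x_0|$ term blows up like $|x-x_0|^{-1}$ as $x\to x_0$ with the \emph{wrong} sign for a supersolution of the form $g(P_0)+K_5|x-x_0|+\cdots$, which is exactly why Theorem~\ref{T:holder-bdry} was forced to use the weaker $|x-x_0|^\alpha$ barrier (whose Laplacian $\sim |x-x_0|^{\alpha-2}$ is still singular, but is dominated near $x_0$ by the favourable first-order contribution $f(\alpha K_*/|x-x_0|^{1-\alpha})$ coming from the $p\otimes p$ part). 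With $\eps=0$ that competition disappears, the degenerate rank-one structure kills the second-order term for the cone barrier outright, and one recovers full Lipschitz regularity; one should however be a little careful that the constants in the three displayed inequalities constraining $K_5$ and $\lambda$ can indeed be met simultaneously — this is automatic since, after dropping all $\eps$-terms, they reduce to $K_5\alpha\geq\max\{1,\|Dg\|_\infty,\|g\|_\infty\}$ (with $\alpha$ now simply $1$) and $M\|g\|_\infty + (1+MT)\lambda < $ (a quantity linear in $K_5$ with positive leading coefficient), so choosing $K_5$ large and then $\lambda$ in the allowed range works.
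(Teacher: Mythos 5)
Your key computation in the first paragraph breaks the argument. With the pure cone barrier $v^+ = g(P_0) + K_5|x-x_0| + \lambda(t_0-t)$ you correctly observe that $D^2 v^+$ annihilates the gradient direction, so the second-order term in $\mathcal{L}^{0,\delta}v^+$ vanishes identically; but then $v^+_t - \mathcal{L}^{0,\delta}v^+ = -\lambda - H(v^+)$, and your claimed bound ``$\geq -\lambda - M(\|g\|_\infty + K_5 + T\lambda) > 0$ for $\lambda$ large'' is false: the left-hand side is at most $-\lambda < 0$ whenever $H\equiv 0$, and enlarging $\lambda$ only makes it more negative. In other words, the cone is a strict \emph{sub}solution, not a supersolution, so the comparison step $v^+\geq u$ in $Q^*$ never gets off the ground. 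Your own sanity check at the end exposes the same problem: setting $\alpha=1$ in the constraints of Theorem~\ref{T:holder-bdry} makes the favourable coefficient $(1-\alpha)\alpha^h K_*^h$ vanish, so the inequality $M\|g\|_\infty + (1+MT)\lambda < (1-\alpha)\alpha^h K_*^h - (\eps(d-1)+M)\alpha K_*$ cannot be met; the right-hand side is \emph{not} ``linear in $K_5$ with positive leading coefficient'' at $\alpha=1$, it is nonpositive. The favourable term in the Hölder proof comes precisely from the concavity of $|x-x_0|^\alpha$ in the radial direction acting through the $p\otimes p$ part, and a barrier that is exactly linear in $|x-x_0|$ has no such term to absorb $\lambda$ and $H$.

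The repair, which is what the paper does, is to keep a linear leading term but add a concave quadratic correction: take $v^+(x,t) = g(P_0) + L_*|x-x_0| - K_*|x-x_0|^2 + \lambda(t_0-t)$ with $L_* > 3K_*$, so that the radial gradient $|L_* - 2K_*|x-x_0||$ stays bounded below by $K_*\geq 1$ on $B_1(x_0)$ (in particular it never vanishes and never blows up). Then the rank-one part produces the positive contribution $2K_*\, f\bigl(|L_* - 2K_*|x-x_0||\bigr)$ with $f(z) = (z^2+\delta^2)^{(h-3)/2}z^2 \geq z^{h-1}$ for $z\geq 1$, uniformly in $\delta\in(0,1)$, and this term (of size at least $2K_*^{h}$ up to the bookkeeping in the paper) dominates $\lambda$ plus the $H$-contribution once $K_*$ is chosen large. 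Your intuition for why $\eps$ must be zero is right in spirit — with $\eps>0$ the term $\eps\,\lap v^+ \sim \eps(d-1)L_*/|x-x_0|$ is singular with the wrong sign and forces the Hölder-type barrier, whereas with $\eps=0$ a barrier with bounded gradient works — but the mechanism is not that the second-order term disappears; it is that a bounded-gradient, radially concave barrier still feeds a uniformly positive amount through the degenerate $p\otimes p$ part. The boundary comparisons on $\del_p Q^*$, the symmetric lower barrier, and the reduction of the merely-continuous case to smooth data are essentially as you describe and match the paper, once the barrier is corrected.
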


\begin{proof}
Let $K_*$, $L_*$ and $\lambda$ be positive constants and define
\[
    v^+(x,t) = g(P_0) + L_*|x-x_0| - K_*|x-x_0|^2 + \lambda(t_0 - t).
\]
We will check $v^+$ is a viscosity strict supersolution on $Q^*$ ($Q^*$ defined as in the proof of
Theorem~\ref{T:holder-bdry}) above $g$ for an appropriate choice of constants $L_*$, $K_*$ and $\lambda$.
Observe that
\[
    v^+_t(x,t) - \mathcal{L}^{0,\delta} v^+(x,t) = -\lambda + 2 K_* f(|L_* - 2K_*|x-x_0||) - H(v^+(x,t)),
\]
where, as in the proof of Theorem~\ref{T:holder-bdry}, $f(z) = (z^2 + \delta^2)^{(h-3)/2} z^2$. Recall from that proof
that $f(z) \geq z^{h-1}$ for $z \geq 1$ and for all $h>1$. Hence, if we choose $L_*$ and $K_*$ such that $L_* > 3K_*$ and
$2K_*^{h+1} -2MK_* > (1+MT)\lambda + M\|g\|_\infty$, the above inequality shows that $v^+$ is a supersolution.

We need to further choose the constants so that $v^+ \geq f$ on the parabolic boundary of $Q^*$. Let $P = (x,t)$
be a point in $\Gamma^* = \del_p Q^*$. If $x\in\del \Omega$, as before
\[\begin{aligned}
    u(P) &= g(P) \leq g(P_0) + \|Dg\|_{\infty} |x-x_0| + \|g_t\|_{\infty}(t_0-t) \\
    &\leq g(P_0) + (L_*-1)|x-x_0| + \lambda(t_0-t) < v^+(P),
\end{aligned}\]
provided $L_* \geq \|Dg\|_{\infty} + 1$, $K_* \geq 1$ and $\lambda >\|g_t\|_{\infty}$. If
$x\in \Omega\cap \del B_{1}(x_0)$, then
\[
    u(P) \leq \|g\|_{\infty} \leq g(P_0) + (L_* -1)|x-x_0| + \lambda(t_0-t) \leq v^+(P),
\]
provided $L_* \geq \|g\|_{\infty} + 1$ and yet again $K_* \geq 1$.

When $P = (x,t)$ is on the bottom of the cylinder $Q^*$, as before we consider two cases. When $t_0 \geq 1$,
$t = t_0-1$ and
\[
    u(P) \leq \|g\|_{\infty} \leq g(P_0) + \lambda < v^+(P)
\]
as long as $\lambda \geq \|g_t\|_{\infty}$ and $L_* \geq \max\{1,K_*\}$. On the other hand, if
$t_0 <1$, $t=0$ and hence
\[
    u(p) = g(P) \leq \|g\|_{\infty} \leq g(P_0) + \lambda < v^+(P)
\]
under the exact same conditions as for the previous formula.

Therefore, we have $u \leq v^+$ on $\Gamma^*$, and thus by comparison on $Q^*$, as long as we take
\[\begin{aligned}
    \lambda \geq \|g_t\|_{\infty}, \quad
    K_* \geq \max\left\{ 1, \sqrt{\lambda/2} \right\} \quad \text{and} \quad
    L_* \geq \max\left\{ 2, \, \|Dg\|_{\infty}+1, \, \|g\|_{\infty}+1,\, 3K_* \right\}.
\end{aligned}\]
Using once more the comparison principle, we have that for $x\in \Omega\cap B_{1}(x_0)$,
\[
    u(x,t_0) \leq v^+(x,t_0) \leq g(P_0) + L_*|x-x_0|.
\]
Using instead the barriers
\[
    v^-(x,t) = g(P_0) - L_*|x-x_0| + K_*|x-x_0|^2 + \lambda(t-t_0)
\]
we obtain the reverse inequality and, as a consequence, the Lipschitz estimate.

Let us finally merely assume that $g$ is continuous and let $\omega_g(\sigma)$ be a modulus of continuity
at $P_0$. More specifically, let $\omega_g$ be a continuous, decreasing function in $\sigma$ such that
$|g(P) - g(P_0)| \leq \omega_g(\sigma)$ whenever $\max\{{|x-x_0|},\, |t-t_0|\} \leq \sigma$. Let $\sigma \in (0,t_0)$
and define the smooth functions
\[
    g^{\pm}(x,t) \bydef g(x_0,0) \pm \omega_g(\sigma) \pm \frac{4\|g\|_{\infty}}{\sigma^2} |x-x_0|^2
        \pm \frac{2\|g\|_{\infty}}{\sigma} |t - t_0|.
\]
If $\max\{|x-x_0|,\, |t-t_0|\} \leq \sigma$, then
\[
    g^{-} (P) \leq g(P_0) - \omega_g(\sigma) \leq g(P) \leq g(P_0) + \omega_g(\sigma) \leq g^+(P),
\]
and if $\max\{|x-x_0|,\, |t-t_0|\} \geq \sigma$ then
\[
    g^-(P) \leq -\|g\|_{\infty} \leq g(P) \leq \|g\|_{\infty} \leq g^+(P).
\]
Therefore, if $u^\pm$ are the solutions of \eqref{E:app-e-d} with $\eps=0$ and initial data $g^\pm$, by comparison
$u^- \leq u \leq u^+$ on $Q$. Since $u^\pm$ are smooth we can apply the first part of the theorem to deduce that
\[
    |u^\pm(x,t_0) - g^\pm(P_0)| \leq K_5^+ |x-x_0|,
\]
where $K_5^+$ depends on $\|g\|_{\infty}$ and $\sigma$. From these inequalities we get
\[\begin{aligned}
    |u(x,t_0) &- g(P_0)| 
    \leq 2K_5^+|x-x_0| + \frac{3}{2}\omega_g(\sigma).
\end{aligned}\]
This finishes the proof.
\end{proof}

Our final estimate is the interior Lipschitz estimate.
\begin{theorem}\label{T:lip-x}
Let $g$ and $u$ be as in Theorem~\ref{T:lip-bdry}. For every $x,y\in \Omega$ and $t\in(0,T)$
\[
    |u(x,t) - u(y,t)| \leq K_5|x-y|,
\]
where $K_5$ is the constant given in that theorem.
If $g$ is only continuous, then the modulus of continuity of $x\mapsto u(x,t)$ can be estimated in terms
of $\|g\|_\infty$ and the modulus of continuity of $g$ in $x$.
\end{theorem}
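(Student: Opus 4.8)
The plan is to exploit the translation invariance of the approximate operator in $x$ and reduce the interior estimate to the boundary estimate of Theorem~\ref{T:lip-bdry}, exactly as Theorem~\ref{T:holder-x} was reduced to Theorem~\ref{T:holder-bdry}. The improvement that allows us to keep the \emph{same} constant $K_5$ is that, unlike for the H\"older exponent, the triangle inequality for $|x-y|$ is an equality along a segment, so no factor is lost when decomposing. Since $t$ plays no role, I suppress it. Observe that neither $A^{0,\delta}$ nor $H$ depends on $(x,t)$, so for any $z\in\R^d$ the shifted function $u_z(x,t)\bydef u(x-z,t)$ is again a viscosity solution of \eqref{E:app-e-d} with $\eps=0$, now on the cylinder over $\Omega_z=z+\Omega$.

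Fix $z$ with $|z|\le 1$ and set $V=\Omega\cap\Omega_z$. The first step is to check that $|u-u_z|\le K_5|z|$ on the parabolic boundary of $V\times(0,T)$. A lateral boundary point $p$ lies in $\del\Omega$ or in $\del\Omega_z$ (and then, since $p\in\overline{V}\subset\overline\Omega\cap\overline{\Omega_z}$, the other of $p,p-z$ lies in $\overline\Omega$); in the first case $u(p,t)=g(p,t)$ and Theorem~\ref{T:lip-bdry} applied with boundary point $p$ and interior point $p-z$ (or the Lipschitz bound for $g$ when $p-z\in\del\Omega$) gives $|u(p-z,t)-g(p,t)|\le K_5|z|$; the second case is symmetric, and on the base $t=0$ one uses $u=g$ together with the Lipschitz bound on $g$ (assuming, as we may, $K_5\ge\|Dg\|_\infty$). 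By the comparison principle for \eqref{E:app-e-d}, used exactly as for the barriers in the proofs above, this yields $|u(x,t)-u_z(x,t)|\le K_5|z|$ for every $x\in V$.

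Given arbitrary $x,y\in\Omega$ with $|x-y|\le 1$, take $z=x-y$; then $x\in\Omega$ and $x-z=y\in\Omega$, so $x\in V$ and the previous step gives $|u(x,t)-u(y,t)|=|u(x,t)-u_z(x,t)|\le K_5|x-y|$. (If one prefers to quote only interior points, i.e.\ points lying in $\Omega_{|x-y|}$, one splits $[x,y]$ at its midpoint, or, when the segment leaves $\Omega$, at its first and last exit points $w_1,w_2\in\del\Omega$, using $|u(w_1,t)-u(w_2,t)|=|g(w_1,t)-g(w_2,t)|\le\|Dg\|_\infty|w_1-w_2|\le K_5|w_1-w_2|$; since the pieces have lengths summing to $|x-y|$, the bound adds up to $K_5|x-y|$ with no loss.) For $|x-y|>1$ the estimate follows from $|u(x,t)-u(y,t)|\le 2\|g\|_\infty\le K_5|x-y|$, which holds after enlarging $K_5$ if necessary — compatible with the freedom in the choice of the constants in the proof of Theorem~\ref{T:lip-bdry}. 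When $g$ is only continuous one argues as at the end of that proof: sandwich $u$ between the solutions $u^\pm$ of \eqref{E:app-e-d} with smooth data $g^\pm$ satisfying $g^-\le g\le g^+$ on $\Gamma$ and $|g^\pm(P_0)-g(P_0)|$ controlled by $\omega_g(\sigma)$, apply the estimate just proved to $u^\pm$, and let $\sigma\to 0$ to obtain a modulus of continuity for $x\mapsto u(x,t)$ depending only on $\|g\|_\infty$ and the modulus of continuity of $g$ in $x$.

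The step that requires care is the second paragraph: verifying the inequality on every portion of $\del_p(V\times(0,T))$ and then invoking comparison on the set $V$, which may be disconnected and, if $\Omega$ is unbounded, unbounded. This is legitimate in the present generality because the comparison principle is applied componentwise and the a priori bounds used in the earlier estimates already accommodate unbounded $\Omega$ (and bounded data). Everything else is a routine transcription of the H\"older argument with exponent $1$.
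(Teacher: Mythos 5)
Your proposal is correct and is essentially the paper's own argument: translation invariance gives the shifted solution $u_z$, the boundary estimate of Theorem~\ref{T:lip-bdry} (plus the Lipschitz bound on $g$) controls $|u-u_z|\leq K_5|z|$ on $\del_p\bigl((\Omega\cap\Omega_z)\times(0,T)\bigr)$, comparison propagates it inside, and the merely-continuous case is handled by the same sandwich with $g^\pm$ as in the paper. The only difference is a harmless streamlining: by taking $z=x-y$ you observe that $x\in\Omega\cap\Omega_z$ for \emph{every} pair $x,y\in\Omega$ with $|x-y|\leq 1$, so the paper's chaining through points of $\Omega_{|x-y|}$ along the segment $[x,y]$ (which you keep only as a parenthetical alternative) is not needed, and the constant $K_5$ is preserved exactly as in the paper's proof.
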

\begin{proof}
The proof is similar to the proof of Theorem~\ref{T:holder-x}, but in this case it is easy to get
the optimal Lipschitz constant. Once again we omit the time dependence of $u$. Take $z\in\Rd$ such that
$|z| \leq \rho_5$. Define $V = \Omega \cap \Omega_z$ and let
$u_z(x) \bydef u(x-z)$. From previous theorem we know that $|u(x)-u_z(x)| \leq K_5 |z|$ on $\del V$. Using
comparison, we have that $u_z(x) - K_5|z| \leq u(x) \leq u_z(x) + K_5|z|$ in $V$. Therefore,
$|u(x)-u(y)| \leq K_5|x-y|$ if $x,y\in \Omega_{x-y}$, and in particular the same is true if $x,y \in \Omega_{|x-y|}$,
$\Omega_r = \{ x\in \Omega \mid \dist(x,\del \Omega) >r\}$.

Suppose now $x-y\notin \Omega_{|x-y|}$ and let us first assume that the whole segment $[x,y] = \{ z\in \Rd \mid z
= \theta y + (1-\theta) x, \; 0\leq\theta\leq 1 \}$ is in $\Omega$. Let us assume without loss of generality that
$\rho = \dist(x,\del \Omega) \leq \dist(y, \del \Omega)$. We can find points $x_i$, $0\leq i \leq n$ such that $x = x_0$,
$x_n = y$, $x_i \in [x_{i-1},x_{i+1}]$ ($1\leq i \leq n-1$), and $\rho_i = |x_i - x_{i-1}| \leq \rho$
($1\leq i \leq n$). Noting that $x_i,x_{i-1}\in \Omega_{x_i-x_{i-1}}$ we can use the previous step to conclude that
$|u(x_i) - u(x_{i-1})| \leq K_5|x_i - x_{i-1}|$, and hence
\[
    |u(x) - u(y)| \leq \sum_{i=1}^n |u(x_i) - u(x_{i-1})| \leq K_5 |x-y|.
\]

If, on the other hand $[x,y] \notin \Omega$, then we can find points $x_1, x_2 \in \del \Omega \cap [x,y]$ such that
$[x,x_1) \subset \Omega$ and $[x_2,y]\setminus\{x_2\} \subset \Omega$. We can further choose $w_1\in [x,x_1]$, with
$|w_1 - x_1| \leq 1$ and $w_2 \in [x_2,y]$, with $|w_2 - x_2| \leq 1$. Then we apply the above to
obtain $|u(x) - u(w_1)| \leq K_5 |x-w_1|$, $|u(w_2) - u(y)| \leq |w_2-y|$, while from the previous theorem,
$|u(w_i) - u(x_i)| \leq K_5 |w_i - x_i|$. Putting all these inequalities together gives the Lipschitz estimate
for this last case.

The proof of the statement with the modulus of continuity follows as in the proof of
Theorem~\ref{T:holder-x}.
\end{proof}

We finally prove Theorem~\ref{T:exist}. Existence is proved by piecing out the results in Theorems~\ref{T:lip-t},
\ref{T:holder-x} and \ref{T:lip-x}, and using the standard compactness arguments, as is done in
\cite[Theorem~1.1]{PV}. Uniqueness follows directly from the comparison principle, Theorem~\ref{T:comparison}.

\begin{proof}[Proof of Theorem~\ref{T:exist}]
The proof when $\Omega$ is bounded is similar to the proof of \cite[Theorem~1.1]{PV} and we just sketch it here.
Assume first $g\in C^2(Q)\cap \mathrm{Lip}(\overline{Q})$. The comparison principle and Theorems~\ref{T:lip-t}
and~\ref{T:holder-x} imply that the family of functions $\{u^{\eps,\delta}\}$ is uniformly bounded and
equicontinuous, therefore, for some sequence $\eps_k \to 0$, $u^{\eps_k,\delta} \to u^\delta$, which, by a
standard argument of viscosity solutions, is a solution of \eqref{E:app-e-d} with $eps=0$.
Then, using Theorems~\ref{T:lip-t} and~\ref{T:lip-x}, we can find a sequence $f^{\delta_k} \to u$. The stability
arguments for viscosity solutions work here to show that $u$ is a viscosity subsolution of \eqref{E:main}.

If $\Omega$ is not bounded we define $\Omega_R \bydef \Omega\cap B_r(0)$, $Q_R \bydef \Omega_R\times (0,T]$ and
$g_R: \Gamma_R\to\R$, $\Gamma_R = \del_p Q_R$, by $g_R(x,t) \bydef 0$ if $|x|=R$, $g_R(x,t) \bydef \chi_R(x)g(x,t)$
for $(x,t)\in\Gamma\cap Q_R$, where $\chi_R(x) = \chi(x/R)$ and $\chi\in C^\infty_{\mathrm{c}}(\Rd)$ satisfies
$\chi(x) = 1$ if $|x| \leq 1/2$, $\chi(x) = 0$ if $|x| \geq 1$. In $Q_R$ there exists a unique viscosity solution
with initial/boundary data $g_R$. From the assumptions on $g_R$, the estimates for $u_R$ and the stability of the
viscosity solutions we can let $R\to \infty$ to obtain the result.
\end{proof}

\section{Asymptotic behaviour in the whole space}

We consider in this section Cauchy problem \eqref{E:main-cauchy}. We first obtain a decay rate for the solutions of this
problem and then prove the asymptotic convergence.

\subsection{Decay rate}
Using the similarity solutions and the comparison principle we readily obtain the following estimates.
\begin{theorem}\label{T:cauchy-decay-rate}
Let $u$ be the unique viscosity solution of \eqref{E:main-cauchy} with $u_0\in C_{\mathrm{c}}(\Rd)$ not identically
zero, $u_0 \geq 0$. Then, there exist positive constants $c$ and $C$ depending only on $u_0$ such that for $t>0$
\begin{equation}\label{E:cauchy-bounds}
    c (1+t)^{-\oo{2h}} \leq \max_{x\in\Rd} |u(x,t)| \leq C (1+t)^{-\oo{2h}}.
\end{equation}
Moreover, the support of $u$ expands continuously at a rate of the order of $t^{\oo{2h}}$.
\end{theorem}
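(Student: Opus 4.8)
The plan is to trap $u$ between two time-- and space--shifted Barenblatt profiles $B_{R,h}$ from \eqref{E:barenblatt} and read off the rates from the comparison principle. Since \eqref{E:main} is autonomous and translation invariant, each function $(x,t)\mapsto B_{R,h}(x-y,t+\tau)$ is again a viscosity solution on $\Rd\times(0,\infty)$ (Proposition~\ref{P:barenblatt-is-visc} together with the symmetries \eqref{E:sym}, transposed to viscosity solutions), so Theorem~\ref{T:comparison} may be applied against such barriers. I first record that, because $u_0\geq 0$ and the constant $0$ solves the problem, comparison gives $u\geq 0$, hence $\max_{x}|u(x,t)|=\max_{x}u(x,t)$, and I henceforth work with $u$ itself.

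\emph{Upper bound.} Choose $y_0$ and $\rho_0$ with $\supp u_0\subseteq\overline{B}_{\rho_0}(y_0)$. On $\overline{B}_{\rho_0}(y_0)$ one has $B_{R,h}(x-y_0,1)=c_h\bigl[R^{(h+1)/h}-|x-y_0|^{(h+1)/h}\bigr]_+^{h/(h-1)}\geq c_h\bigl[R^{(h+1)/h}-\rho_0^{(h+1)/h}\bigr]^{h/(h-1)}$, so fixing $R>\rho_0$ large enough that this last quantity exceeds $\|u_0\|_\infty$ gives $B_{R,h}(\cdot-y_0,1)\geq u_0$ on all of $\Rd$ (off $\overline{B}_{\rho_0}(y_0)$ the data vanish). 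Comparison then yields $u(x,t)\leq B_{R,h}(x-y_0,t+1)\leq B_{R,h}(0,t+1)=c_hR^{(h+1)/(h-1)}(1+t)^{-\oo{2h}}$, which is the right inequality in \eqref{E:cauchy-bounds} with $C=c_hR^{(h+1)/(h-1)}$.

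\emph{Lower bound and support.} By continuity of $u_0$ there are $y_1$, $r>0$ and $\eta>0$ with $u_0\geq\eta$ on $\overline{B}_r(y_1)$. For a parameter $\tau\geq 1$ put $R'=r\,\tau^{-\oo{2h}}$, so that the positivity ball of $B_{R',h}(\cdot,\tau)$ is exactly $B_r(0)$ and its maximum equals $c_h(R')^{(h+1)/(h-1)}\tau^{-\oo{2h}}=c_h r^{(h+1)/(h-1)}\tau^{-\oo{h-1}}$ (the exponent collapses since $\tfrac1{2h}+\tfrac{h+1}{2h(h-1)}=\tfrac1{h-1}$). Choosing $\tau$ large enough that this maximum is $\leq\eta$ gives $B_{R',h}(\cdot-y_1,\tau)\leq u_0$ everywhere, and comparison yields $u(x,t)\geq B_{R',h}(x-y_1,t+\tau)$. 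Hence $\max_x u(x,t)\geq B_{R',h}(0,t+\tau)=c_h(R')^{(h+1)/(h-1)}(t+\tau)^{-\oo{2h}}\geq c\,(1+t)^{-\oo{2h}}$ with $c=c_h(R')^{(h+1)/(h-1)}\tau^{-\oo{2h}}$, using $t+\tau\leq\tau(1+t)$. The two barriers also give $\overline{B}_{R'(t+\tau)^{\oo{2h}}}(y_1)\subseteq\supp u(\cdot,t)\subseteq\overline{B}_{R(t+1)^{\oo{2h}}}(y_0)$, so the support is squeezed between balls of radii continuous in $t$ and of order $t^{\oo{2h}}$, which is the final assertion.

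The step requiring the most care is the use of Theorem~\ref{T:comparison} on $\Omega=\Rd$, where the parabolic boundary reduces to $\Rd\times\{0\}$: this is legitimate because $u$ is the \emph{bounded} solution produced by Theorem~\ref{T:exist} and each Barenblatt barrier is bounded with compact support in $x$ at every fixed time, so the bounded form of the comparison principle applies; if one prefers, one localizes on $B_\rho(0)\times(0,T)$ for large $\rho$ and controls $u$ on the lateral boundary by the upper barrier already constructed. All the remaining content is the elementary algebra above and the bookkeeping that passes from $(t+\tau)^{-\oo{2h}}$ to a constant multiple of $(1+t)^{-\oo{2h}}$.
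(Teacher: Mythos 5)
Your proof of the two-sided bound \eqref{E:cauchy-bounds} is correct and is essentially the paper's argument: sandwich $u_0$ between a small Barenblatt profile placed under the data near a point of positivity and a large one placed above, both shifted in time, and invoke Theorem~\ref{T:comparison}; your explicit choice $R'=r\,\tau^{-\oo{2h}}$ and the exponent bookkeeping are fine, as is the remark about applying comparison on $\Omega=\Rd$ (or localizing on large balls).

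The gap is in the last assertion. The theorem claims the support \emph{expands continuously} at rate $t^{\oo{2h}}$, and your argument only yields the global inclusion $\overline{B}_{R'(t+\tau)^{\oo{2h}}}(y_1)\subseteq\supp u(\cdot,t)\subseteq\overline{B}_{R(t+1)^{\oo{2h}}}(y_0)$. That controls the overall size of the support (the rate), but it says nothing about how the boundary of the support moves: the support could in principle be a complicated set whose boundary jumps outward instantaneously at some time, or fails to be monotone, while still sitting between your two balls. The paper's proof handles this with a \emph{local} barrier argument at an arbitrary point $P_*$ of the boundary of the support: small Barenblatt subsolutions $B_1\leq u$ centered near $P_*$ prevent the free boundary from jumping inward (and give persistence/monotonicity of the positivity set), while the blow-up supersolutions $V(x,t;r_0,t_0)$ of Proposition~\ref{P:blow-up-sol}, placed above $u$ for a short time, prevent the support from jumping outward beyond the support of $V$. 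Your proposal never uses Proposition~\ref{P:blow-up-sol} (nor any local upper barrier vanishing on a sphere near the free boundary), so the "expands continuously" part of the statement is not established; you would need to add this local sub/supersolution comparison at free-boundary points to complete the proof.
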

\begin{proof}
We can find similarity solutions $B_1$ and $B_2$ of the form \eqref{E:barenblatt} such that
\[
    \pm B_1(x,0) \leq u_0(x) \leq \pm B_2(x,0),
\]
which immediately gives \eqref{E:cauchy-bounds}. Indeed, if $x_o$ is a point where $u_0(x_o) \ne 0$, then, with $r$ sufficiently
small we can take $B_1(x,t) = B_{r,h}(x-x_0,t+1)$ and with $R$ sufficiently large $B_2(x,t) = B_{R,h}(x,t+1)$.

To prove that the support expands continuously, take a point $P_*$ on the boundary of the support of $u$ and the take a function
of the form $B_1$, as above, with $B_1 \leq u$, with center sufficiently close to $P_*$. Then, the comparison principle implies
that the support cannot jump ``inward'' toward the support of $B_1$. In the other direction, we can find a function $V_2$ of the
form given in Proposition~\ref{P:blow-up-sol} such that $u \leq V_2$ for a short time. Again the comparison principle implies that
the support can not jump ``outward'' beyond the support of $V_2$. The rate of expansion of the support of $u$ is controlled by
the rates of expansion for $B_1$ and $B_2$ above, that is, it has to be $t^{\oo{2h}}$.
\end{proof}

\subsection{Asymptotic behaviour. Proof of Theorem \ref{T:Cauchy}}

Step 1. The idea of the proof is the same as in the proof of \cite[Theorem~1.4]{PV}. First we let $B_1$ and $B_2$ sandwich $u$
as in the proof of Theorem~\ref{T:cauchy-decay-rate}. Then consider the family of rescaled solutions $u^\lambda$, $B_1^\lambda$
and $B_2^\lambda$, where for a function $f(x,t)$ we define $f^\lambda(x,t)$ by
\[
    f^\lambda(x,t) = \lambda^{\oo{2h}} f(\lambda^{\oo{2h}}x,\lambda t).
\]
Since $B_1$ and $B_2$ are invariant under this transformation, on any compact time interval $[t_1,t_2]$ with $0<t_1<t_2<\infty$
the family $u^\lambda$ is continuous, uniformly bounded, and supported on a uniform ball $B_{R_*}(0)$.

Step 2. Now we use Aleksandrov's principle, as explained for instance in \cite{CVW}, to show that
for a solution $u(x,t)$ with initial data $u_0(x)\geq 0$ supported in the ball $B_{R}(0)$ we have
for all $t\geq 0$ and all $r>R$
\[
    \inf_{|x|=r} u(x,t)= \max_{|x|=r+2R} u(x,t)
\]
Note that in doing this, we need to use the traveling wave solutions from Proposition~\ref{P:travel-wave-sol} to show that
the solutions are almost radial. If this is applied to the rescaled solutions, we get for all $|x|\geq R_\lambda =
R\,\lambda^{-1/(2h)}$
\[
    \inf_{|x|=r} u^\lambda(x,t)= \max_{|x|=r+2R_\lambda} u^\lambda(x,t).
\]

Step 3. We now fix $t=1$, $\lambda $ very large, so that $R_\lambda\leq \eps$ is very small, and define
\[
    \tilde u_1(r) \bydef \inf_{|x|=r} u^\lambda(x,1), \quad
    \tilde u_2(r) \bydef \max_{|x|=r} u^\lambda(x,1).
\]
We easily verify that $\tilde u_1(r),\, \tilde u_2(r)$ are nonnegative and radially symmetric functions,
both supported in the same ball $B_{R_*}(0)$, they are nonincreasing as functions of $r$ for $r\geq \eps$,
and we also have
\[
    \tilde u_2(r)\geq \tilde u_1(r)\geq \tilde u_1(r+\eps)
\]
for all $r\geq \eps$. It is then easy to verify that the 1-$d$ mass of $\tilde u_2(r) - \tilde u_1(r)$ is less than
$C\eps $.

Step 4. If $u_1(r,t)$ and $u_2(r,t)$ are the corresponding radial solutions of
the problem with initial data at $t=1$ given by $\tilde u_1(r)$ and $\tilde u_2(r)$, respectively, we have for all $t\geq 1$
\[
    u_1(r,t)\leq u^\lambda(x,t)\leq u_2(r,t)
\]
As with the convergence result for the 1-$d$ PME, cf.\ \cite[Theorem 18.1]{V}, the result follows.

\section{Asymptotic behaviour on bounded domains}\label{S:bdd-dom}

In this section we analyze the asymptotic behaviour of the solution of the homogeneous
Dirichlet problem \eqref{E:homo-Dirichlet}. As in the previous section, we first obtain a decay rate for the solutions of the
problem and then prove the asymptotic convergence.

\subsection{Decay rate}
We start by proving that the decay rate for the solutions of \eqref{E:homo-Dirichlet} is $t^{-\oo{h-1}}$, the same as the decay
rate of the friendly giants in \eqref{E:friendly-giant}. Of course, this is not a coincidence.

\begin{theorem}\label{T:Dirichlet-decay-rate}
Let $u_0$ satisfy the condition of Theorem~\ref{T:asymptotic-Dirichlet}, let $u$ be the unique solution of \eqref{E:homo-Dirichlet} and
assume $x_0$ is a point where $u_0(x_0) >0$. There exist positive
constants $t_0$, $r_1$ and $r_2$ such that, with $X_r$ as in \eqref{E:friendly-giant}, there holds for every $t>0$,
\[\begin{aligned}
    X_{r_1}(x-x_0) \leq (t+t_0)^{\oo{h-1}} u(x,t),& &&\quad \text{for $x\in B_{r_1}(x_0)$ and} \\
    t^{\oo{h-1}} u(x,t)& \leq X_{r_2}(x-x_0), &&\quad \text{for $x\in\Omega$.}
\end{aligned}\]
\end{theorem}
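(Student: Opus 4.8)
The plan is to sandwich the solution $u$ between two scaled copies of the friendly giant $S$ from \eqref{E:friendly-giant}, using the comparison principle, Theorem~\ref{T:comparison}, as the sole mechanism. Recall $S(x,t;r_0,t_0) = X_{r_0}(x)/(t-t_0)^{1/(h-1)}$ is a positive viscosity solution of \eqref{E:main} on $B_{r_0}(0)\times(t_0,\infty)$ vanishing on the lateral boundary $\partial B_{r_0}(0)\times(t_0,\infty)$, so translates $S(x-x_0,t;\cdot,\cdot)$ are solutions on balls centred at $x_0$. The key point is that $S$ decays exactly at the rate $t^{-1/(h-1)}$, which is why this rate appears in the theorem.

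For the \emph{lower bound}, I would first choose $r_1>0$ small enough that $\overline{B_{r_1}(x_0)}\subset\Omega$ and that $u_0 \geq \mu > 0$ on $\overline{B_{r_1}(x_0)}$; such $\mu$ exists by continuity of $u_0$ and $u_0(x_0)>0$. Then I want $t_0>0$ large enough that the lower barrier $\underline u(x,t) := S(x-x_0, t+t_0; r_1, 0) = X_{r_1}(x-x_0)/(t+t_0)^{1/(h-1)}$ satisfies $\underline u(x,0) = X_{r_1}(x-x_0)/t_0^{1/(h-1)} \leq \mu \leq u_0(x)$ on $B_{r_1}(x_0)$; since $X_{r_1}$ is bounded on $B_{r_1}(x_0)$, this just requires $t_0 \geq (\max X_{r_1}/\mu)^{h-1}$. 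On the lateral boundary $\partial B_{r_1}(x_0)\times(0,\infty)$ the barrier vanishes while $u \geq 0$, so $\underline u \leq u$ on the whole parabolic boundary of $B_{r_1}(x_0)\times(0,\infty)$. Comparison then gives $\underline u \leq u$ in $B_{r_1}(x_0)\times(0,\infty)$, i.e. $X_{r_1}(x-x_0) \leq (t+t_0)^{1/(h-1)} u(x,t)$, which is the first inequality.

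For the \emph{upper bound}, I would use a wider friendly giant centred at $x_0$. Pick $r_2$ large enough that $\overline\Omega \subset B_{r_2}(x_0)$ (possible since $\Omega$ is bounded) and, since $X_{r_2}$ is continuous and strictly positive on the compact set $\overline\Omega$, we have $\inf_{\overline\Omega} X_{r_2} =: \nu > 0$. Consider the candidate upper barrier $\overline u(x,t) := S(x-x_0,t;r_2,0) = X_{r_2}(x-x_0)/t^{1/(h-1)}$ on $\Omega\times(0,\infty)$. On $\partial\Omega\times(0,\infty)$ we have $u=0 \leq \overline u$. The issue is the initial time: $\overline u(x,t)\to\infty$ as $t\downarrow 0$, so one compares not from $t=0$ but observes that for any fixed $s>0$, on $\Omega\times[s,\infty)$ the barrier dominates $u$ on the parabolic boundary provided $\overline u(x,s) = X_{r_2}(x-x_0)/s^{1/(h-1)} \geq \|u(\cdot,s)\|_\infty$; since $\|u(\cdot,s)\|_\infty \leq \|u_0\|_\infty$ by comparison and $X_{r_2}(x-x_0)\geq\nu$, it suffices to pick $s>0$ with $s \leq (\nu/\|u_0\|_\infty)^{h-1}$. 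Comparison on $\Omega\times[s,\infty)$ then yields $u(x,t)\leq \overline u(x,t) = X_{r_2}(x-x_0)/t^{1/(h-1)}$ for $t\geq s$; combined with the trivial bound $t^{1/(h-1)}u(x,t)\leq t^{1/(h-1)}\|u_0\|_\infty \leq \nu \leq X_{r_2}(x-x_0)$ for $0<t\leq s$, we get the second inequality for all $t>0$.

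The only genuine subtlety — the "main obstacle" — is justifying that comparison applies with $S$ (or its translate) as barrier even though $S$ fails to be $C^{2,1}$ on the set $\Sigma$ of spheres $|x-x_0|=k\overline R$ and at the vertex; but this is exactly what the proposition following \eqref{E:friendly-giant} already establishes, namely that $S$ is a genuine \emph{viscosity} solution of \eqref{E:main}, so Theorem~\ref{T:comparison} applies directly. A minor bookkeeping point is that for the lower barrier one should rescale $X_{r_1}$ so its support ball $B_{r_1}(x_0)$ really sits inside $\Omega$ — the definition $X_r(x) = (2r/\overline R)^{-(h+1)/(h-1)} X(\overline R|x|/(2r))$ in \eqref{E:friendly-giant} already handles the adjustment of the support radius, so no further work is needed.
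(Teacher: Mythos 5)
Your proof is correct and follows essentially the same route as the paper: comparison with the translated friendly giant $S(x-x_0,t;r_1,-t_0)$ from below on $B_{r_1}(x_0)$ and with $S(x-x_0,t;r_2,0)$, $\Omega\subset B_{r_2}(x_0)$, from above. You merely spell out details the paper leaves as ``immediate'' (choice of $\mu$, $t_0$, and the handling of the blow-up of the upper barrier as $t\downarrow 0$), and these details are handled correctly.
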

\begin{proof}
It it straightforward, under the conditions of the theorem, to find $r_1$ and $t_0$ such that $S(x-x_0,0;r_1,-t_0) \leq u_0(x)$
on the ball $B_{r_1}(x_0)$ and $r_2$ such that $\Omega\subset B_{r_2}(x_0)$. The result is then immediate from the comparison
principle.
\end{proof}

\subsection{Friendly giant in $\Omega$. Proof of Theorem \ref{T:asymptotic-Dirichlet}}
Let us now show there exists a friendly giant in $\Omega$. We will show that the asymptotic profile for arbitrary
(nonnegative) initial conditions is the profile of this friendly giant.

According to \eqref{E:sym}, we can rescale a solution $u$ of \eqref{E:homo-Dirichlet} by
\[
    u_\lambda(x,t) \bydef \lambda^{\oo{h-1}} u(x,\lambda t)
\]
and still obtain a solution of the same problem but with initial condition $u_{0\lambda}(x) = \lambda^{\oo{h-1}} u_0(x)$.
For $\lambda <1$, by comparison, we see that $u_\lambda(x,t) \leq u(x,t)$. According to \cite[Theorem~2.3]{BC}, we deduce
that
\begin{equation}\label{E:BC-estimate}
    u(x,t+\tau)-u(x,t) \geq -\left[ 1 - \left( \frac{t}{t+\tau} \right)^{\oo{h-1}}\right] u(x,t)
\end{equation}
for $(x,t)\in Q$ and $\tau>0$ such that $t+\tau<T$. Let us now consider the following rescaling of $u$,
\[
    v(x,s) \bydef (h-1)^{\oo{h-1}} e^s u \left(x,e^{(h-1) s} \right).
\]
It is easy to see that $v$ is a viscosity solution of
\[\begin{cases}
    v_s - \lapih v = v &\text{in $\Omega\times (1,\infty]$,} \\
    v(x,0) = (h-1)^{\oo{h-1}} u(x,1) &\text{for $x\in\Omega$,} \\
    v(x,s) = 0 &\text{for $x\in\del\Omega$, $s>1$.}
\end{cases}\]
The estimates for $u$ in Theorem~\ref{T:Dirichlet-decay-rate} and in \eqref{E:BC-estimate} imply the following estimates
for $v$:
\begin{equation}\label{E:v-estimates}
    v(x,t) \leq M,\qquad v(x,s+h) - v(x,s) \geq 0.
\end{equation}
Using our similarity solutions \eqref{E:barenblatt}, the proof of \cite[Lemma~3.2]{LS} adapts step by step,
\textit{mutatis mutandis}, and we have that $v$ eventually becomes positive on any compact subset of $\Omega$.
\begin{lemma}[\mbox{\cite[Lemma~3.2]{LS}}]\label{L:v-pos}
For any compact set $K \subset \Omega$ there exist $s_K$ and $m_K$ such that
\[
    v(x,s) \geq m_K \qquad \text{on $K\times[s_K,\infty)$}.
\]
\end{lemma}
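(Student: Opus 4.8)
The plan is to exploit the two structural facts recorded just before the lemma — that $v$ is continuous and, by \eqref{E:v-estimates}, nondecreasing in $s$ — together with a self-similar barrier built from the Barenblatt solution. Because $v$ is nondecreasing in $s$, it suffices to produce, for each compact $K\subset\Omega$, a single time $s_K$ with $v(\cdot,s_K)>0$ on all of $K$; one then takes $m_K\bydef\min_K v(\cdot,s_K)>0$. We may assume $\Omega$ is connected (otherwise work in the component of $x_0$). First I would invoke Theorem~\ref{T:Dirichlet-decay-rate}: fixing a point $x_0$ with $u_0(x_0)>0$, the bound $X_{r_1}(x-x_0)\leq (1+t_0)^{\oo{h-1}}u(x,1)$ gives a ball $\ol{B_{r_1/2}(x_0)}\subset\Omega$ and a constant $\delta_0>0$ with $v(\cdot,0)=(h-1)^{\oo{h-1}}u(\cdot,1)\geq\delta_0$ on it, hence $v(\cdot,s)\geq\delta_0$ there for every $s\geq0$.

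The heart of the argument is a spreading subsolution. I would observe that the logarithmic time change $u(x,t)\mapsto(h-1)^{\oo{h-1}}e^{s}u(x,e^{(h-1)s})$ that turned \eqref{E:homo-Dirichlet} into $v_s=\lapih v+v$ converts the mass-preserving Barenblatt solution $B_{R,h}$ of \eqref{E:main} into the spatially \emph{expanding} profile
\[
    \Phi(x,s)\bydef (h-1)^{\oo{h-1}}c_h\Bigl[\rho(s)^{\frac{h+1}{h}}-|x-x_0|^{\frac{h+1}{h}}\Bigr]_+^{\frac{h}{h-1}},
    \qquad \rho(s)=R\,e^{\frac{h-1}{2h}s},
\]
and that, after translation in $x$ and $s$ (the equation being autonomous and translation invariant, cf.\ \eqref{E:sym}), this is a viscosity solution of $v_s=\lapih v+v$ in $\Rd\times\R$ — a fact one may also verify directly as in Proposition~\ref{P:barenblatt-is-visc}. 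The point is that $\Phi(\cdot,s)$ is supported in $\ol{B_{\rho(s)}(x_0)}$, whose radius sweeps out all of $\Rd$ as $s\to\infty$, while $\Phi$ vanishes identically outside it. Choosing $R\leq r_1/2$ so small that $\max_x\Phi(x,0)=(h-1)^{\oo{h-1}}c_hR^{\frac{h+1}{h-1}}\leq\delta_0$ makes $\Phi(\cdot,0)\leq v(\cdot,0)$ on $\ol\Omega$; and for any $\rho^*<\dist(x_0,\del\Omega)$, on the cylinder $\Omega\times(0,S)$ with $\rho(S)=\rho^*$ the barrier $\Phi$ still vanishes on $\del\Omega\times[0,S]$. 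The comparison principle for \eqref{E:gen-main} (here with the Lipschitz term $H(v)=v$, on a bounded time interval) then gives $\Phi\leq v$ on that cylinder, so $v(\cdot,S)\geq\Phi(\cdot,S)$ is bounded below by a positive constant on $\ol{B_{\rho^*/2}(x_0)}$, and by monotonicity so is $v(\cdot,s)$ for all $s\geq S$.

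To finish I would propagate this positivity across $\Omega$ by a finite chain of such barriers. Enlarge $K$ to a compact \emph{connected} set $L$ with $K\cup\{x_0\}\subset L\subset\Omega$, set $2\eta_0=\dist(L,\del\Omega)$, and cover $L$ by finitely many balls $B_i=B_\eta(w_i)$, $w_i\in L$, with $\eta<\tfrac{2}{3}\eta_0$; since $L$ is connected, the intersection graph of the $B_i$ is connected. Starting from a ball containing $x_0$ and inducting along this graph, each step reuses the previous paragraph: if $v(\cdot,s)\geq c_i$ on $\ol{B_\eta(w_i)}$ for $s\geq s_i$ and $B_i\cap B_k\neq\emptyset$, pick $p\in B_i\cap B_k$; then $\dist(p,\del\Omega)>\eta_0>2\eta$, so a barrier $\Phi$ centred at $p$ with sufficiently small initial radius lies below $v$ at time $s_i$ and expands, without meeting $\del\Omega$, until it covers $\ol{B_\eta(w_k)}$, giving $s_k\geq s_i$ and $c_k>0$ with $v(\cdot,s)\geq c_k$ on $\ol{B_\eta(w_k)}$ for $s\geq s_k$. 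After finitely many steps we have such $(s_i,c_i)$ for all $i$; taking $s_K=\max_i s_i$, $m_K=\min_i c_i$ and using $K\subset L\subset\bigcup_i B_\eta(w_i)$ completes the proof.

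I expect the only genuine obstacle to be the second step: recognising that the time change defining $v$ sends the Barenblatt profile to a \emph{growing} bump, and checking carefully that this bump is a viscosity subsolution of $v_s=\lapih v+v$ which can be slipped under $v$ on a cylinder before its support reaches $\del\Omega$. Once that barrier is available, the decay-rate estimate of Theorem~\ref{T:Dirichlet-decay-rate} supplies the seed, the comparison principle does the work, and the covering/chaining is routine — this is precisely where, as the text indicates, the proof of \cite[Lemma~3.2]{LS} adapts \emph{mutatis mutandis}.
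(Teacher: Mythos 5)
Your proposal is correct and follows essentially the route the paper intends: it reconstructs the proof of \cite[Lemma~3.2]{LS} using the Barenblatt solutions \eqref{E:barenblatt} transported to the $v$-variables as expanding subsolutions slipped under $v$ by comparison, combined with the monotonicity of $s\mapsto v(x,s)$ coming from \eqref{E:BC-estimate} and a chaining argument over a finite cover of the compact set, which is exactly the adaptation the text refers to. Two cosmetic remarks: to avoid invoking comparison for the non-proper equation $v_s=\lapih v+v$ you can equivalently compare $B_{R,h}$ with $u$ in the original variables via Theorem~\ref{T:comparison} and then apply the time change, and the inequality you actually need in the chaining step is $\dist(p,\del\Omega)>2\eta$, which follows from $\dist(p,\del\Omega)\geq 2\eta_0-\eta$ together with $3\eta<2\eta_0$ (the intermediate claim $\eta_0>2\eta$ need not hold).
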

Using this and the estimates \eqref{E:v-estimates}, we deduce that there exists a lower semicontinuous function
$G_\Omega:\Omega\to \R$ such that $G_\Omega(x) = 0$ on $\del\Omega$ and
\[
    \lim_{s\to\infty} v(x,s) = G_\Omega(x).
\]
Since $v$ is a viscosity solution of \eqref{E:gen-main}, from Theorem~\ref{T:exist} we see that we can control
the modulus of continuity of $v$ and so $G_\Omega$ must be continuous. In terms of $u$, this means
\[
    \lim_{t\to\infty} t^{\oo{h-1}} u(x,t) = F_\Omega(x) \bydef (h-1)^{-\oo{h-1}}  G_\Omega(x).
\]
\begin{theorem}\label{T:evp}
The function $G_\Omega$ is a positive viscosity solution of the eigenvalue problem
\[\begin{aligned}
    -\lapih G_\Omega &= G_\Omega \qquad \text{in $\Omega$}, \\
    G_\Omega(x) &= 0, \qquad \text{for $x\in\del\Omega$}.
\end{aligned}\]
\end{theorem}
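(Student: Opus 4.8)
The plan is to pass to the limit $s\to\infty$ in the viscosity formulation of the rescaled equation $v_s-\lapih v=v$ and to invoke the stability of viscosity solutions under locally uniform convergence; the stationary limit $G_\Omega$ will then automatically solve $-\lapih G_\Omega=G_\Omega$ precisely because it does not depend on $s$. Note that much of the statement has already been obtained: positivity of $G_\Omega$ on $\Omega$ follows from Lemma~\ref{L:v-pos} (each $G_\Omega(x)=\lim_s v(x,s)\geq m_K>0$ on a compact neighbourhood $K$ of $x$), and the facts that $G_\Omega$ is continuous and vanishes on $\del\Omega$ were established just before the statement of the theorem. Thus only the equation in $\Omega$ remains to be proved.

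The first step is to upgrade the pointwise convergence $v(x,s)\to G_\Omega(x)$ to convergence uniform on compact subsets of $\Omega$. The Bénilan--Crandall type estimate \eqref{E:BC-estimate} says exactly that $t\mapsto t^{1/(h-1)}u(x,t)$ is nondecreasing, hence $s\mapsto v(x,s)$ is nondecreasing for each fixed $x$; since every $v(\cdot,s)$ is continuous and the limit $G_\Omega$ is continuous, Dini's theorem yields $v(\cdot,s)\to G_\Omega$ uniformly on every compact $K\subset\Omega$. Equivalently, the equicontinuity furnished by Theorem~\ref{T:exist} applied to \eqref{E:gen-main} with $H(v)=v$ — which satisfies $H(0)=0$ and the linear growth bound with $M=1$ — gives the same conclusion and also controls the time variable. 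I expect this to be the only genuinely delicate point: one must be sure the monotonicity holds in the full variable $s$, which is why I would appeal to \eqref{E:BC-estimate} rather than the weaker form recorded in \eqref{E:v-estimates}, and one must already know that $G_\Omega$ is continuous so that Dini (or the equicontinuity argument) applies.

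With this in hand, set $v_\lambda(x,s):=v(x,s+\lambda)$. Each $v_\lambda$ is a viscosity solution of $\del_s w-\lapih w=w$ on $\Omega\times(1-\lambda,\infty)$, so on any fixed compact subset of $\Omega\times\R$ it solves this equation for all large $\lambda$, and $v_\lambda\to G_\Omega$ uniformly there as $\lambda\to\infty$, the limit being constant in $s$. By the standard stability theorem for viscosity solutions (all $v_\lambda$ solve the same equation $\del_s w+F_h(D^2w,Dw)-w=0$, with $F_h$ in its continuous extension as in Section~\ref{S:viscosity_solutions}), the limit $G_\Omega$, regarded as a function of $(x,s)\in\Omega\times\R$, is a viscosity solution of $\del_s w-\lapih w=w$. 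Finally I would remove the dummy time variable: if $\psi\in C^2(\Omega)$ touches $G_\Omega$ from above at $x_0\in\Omega$ in the sense of Definition~\ref{D:visc-sol}, then $\phi(x,s):=\psi(x)+(s-s_0)^2$ touches $G_\Omega$ strictly from above at $(x_0,s_0)$ in the $(x,s)$ sense, with $\phi_s(x_0,s_0)=0$, $D\phi(x_0,s_0)=D\psi(x_0)$ and $D^2\phi(x_0,s_0)=D^2\psi(x_0)$; the subsolution inequality $\phi_s+F_h(D^2\phi,D\phi)-\phi\leq0$ then reduces to $-\lapih\psi(x_0)\leq\psi(x_0)$, which is exactly the subsolution property for $-\lapih G_\Omega=G_\Omega$. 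The supersolution inequality is obtained symmetrically, and since positivity and the boundary condition are already known, this completes the proof.
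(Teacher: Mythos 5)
Your argument is correct: monotonicity of $s\mapsto v(x,s)$ from \eqref{E:BC-estimate}, Dini (or the equicontinuity from Theorem~\ref{T:exist}) to upgrade to locally uniform convergence, stability of viscosity solutions for the translates $v(\cdot,\cdot+\lambda)$, and the $(s-s_0)^2$ trick to pass from the $s$-independent parabolic solution to the stationary eigenvalue problem, with positivity supplied by Lemma~\ref{L:v-pos}. The paper itself gives no details beyond citing \cite[Theorem~7.3]{PV}, and your proof is exactly the standard argument that citation abbreviates, so it is essentially the same approach.
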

The proof is a simplification of \cite[Theorem~7.3]{PV}. With this we finish the proof of
Theorem~\ref{T:asymptotic-Dirichlet}.

\section{\bf Comments and extensions}

\noindent $\bullet$  The restriction of nonnegativity in the study of asymptotic behaviour can be somewhat weakened
by observation that the equation is invariant under constant displacement of the $u$-variable. Hence, we can assume
that the initial data are compactly suported perturbations of the level $u=c$, $c$ constant. On the other hand, the restriction to compactly supported data can be weakened, but we do not know how to find a reasonably wider class, much
less an optimal class.

\noindent $\bullet$ The solutions of the form $u(x,t)=t^{-\oo{h-1}} X_r(x)$ described in \eqref{E:friendly-giant} are interesting examples of radial solutions which decay in time but not in $|x|$; they oscillate radially in a sine-like fashion. This behaviour is distinct from the several-dimensional heat equation, where it is known that such  solutions decay like a Bessel function as $|x|$ goes to infinity. The restriction of our functions to appropriately chosen balls also gives an example of an asymptotic profile which changes sign as many times as desired. This naturally leads to the question of classifying asymptotic profiles for  sign-changing initial data. We do not know if it is possible to obtain nonradial stable profiles on a ball.

\noindent $\bullet$ In both settings, Dirichlet problem in a bounded domain with zero boundary data and Cauchy Problem
in the whole data, we generate a semigroup enjoying the comparison principle. Since the operator is $h$ homogeneous with $h\ne 1$, the homogeneity estimate of B\'enilan-Crandall \cite{BC} applies and we have for all nonnegative solutions
$$
u_t\ge -\frac1{(h-1)t}\,u
$$
in the sense of distributions. This estimate could be important in deeper studies.

\noindent $\bullet$ The ``zeroth-order'' large time behaviour for the Dirichlet problem with nonhomogeneous boundary conditions is considered in \cite{AJK}. We do not consider this case here, but the techniques used there should work for our class of operators.

\noindent $\bullet$ We are not exploring the cases $h=1$ which is a well-known equation, or the cases $h<1$ which must
have novel properties.

\

\section*{Acknowledgments}
Both authors are partially supported by the Spanish Project MTM2008-06326. The first author is also supported
by the Portuguese Project PTDC-MAT-098060-2008


\end{document}